\DeclareMathOperator{\cone}{cone}
\DeclareMathOperator{\RSS}{RSS}
\DeclareMathOperator{\trace}{tr}
\newcommand{\E}{\mathbb E}
\newcommand{\iso}{\mathrm{iso}}
\newcommand{\anti}{\mathrm{anti}}
\newcommand{\nv}{\mathrm{V}}
\begin{document}

\begin{center}

{\sc \Large A Geometrical Approach to Iterative Isotone Regression \\
\vspace{0.2cm}
}

\vspace{0.5cm}

Arnaud GUYADER\footnote{Corresponding author.}\\
Universit\'e Rennes 2, INRIA and IRMAR \\
Campus de Villejean, Rennes, France\\
\textsf{arnaud.guyader@uhb.fr}
\vspace{0.5cm}

 Nicolas J\'EGOU\\
 Universit\'e Rennes 2\\
Campus de Villejean, Rennes, France\\
\textsf{nicolas.jegou@uhb.fr}
\vspace{0.5cm}

 Alexander B. N\'EMETH\\
Faculty of Mathematics and Computer Science\\
Babe\c s Bolyai University, RO-400084 Cluj-Napoca, Romania\\
\textsf{nemab@math.ubbcluj.ro}
\vspace{0.5cm}

 S\'andor Z. N\'EMETH\\
School of Mathematics, The University of Birmingham\\
Birmingham B15 2TT, United Kingdom\\
\textsf{nemeths@for.mat.bham.ac.uk}

\end{center}

\begin{abstract}
\noindent In the present paper, we propose and analyze a novel method for estimating a univariate regression function of bounded variation. The underpinning idea is to combine two classical tools in nonparametric statistics, namely isotonic regression and the estimation of additive models. A geometrical interpretation enables us to link this iterative method with Von Neumann's algorithm. Moreover, making a connection with the general property of isotonicity of projection onto convex cones, we derive another equivalent algorithm and go further in the analysis. As iterating the algorithm leads to overfitting, several practical stopping criteria are also presented and discussed.
\medskip

\noindent \emph{Index Terms} --- Nonparametric estimation, Isotonic regression, Additive models, Metric projection on  convex cones.
\medskip

\noindent \emph{2010 Mathematics Subject Classification}: 52A20, 62G08, 90C33.     
\end{abstract}

\newenvironment{proof}{{\bf Proof.}}{\hfill$\Box$\\}
\newtheorem{theorem}{Theorem}
\newtheorem{definition}{Definition}
\newtheorem{lemma}{Lemma}
\newtheorem{proposition}{Proposition}
\newtheorem{example}{Example}
\newtheorem{corollary}{Corollary}

\newcommand{\defi}{{\bf  \hspace*{2 mm} Definition\ }}
\newcommand{\arr}{\longrightarrow}
\newcommand{\Arr}{\Rightarrow}
\newcommand{\R}{\mathbb R}
\newcommand{\N}{\mathbb N}
\newcommand{\al}{\alpha}
\newcommand{\la}{\lambda}
\newcommand{\lang}{\langle}

\newcommand{\rang}{\rangle}

\newcommand{\conv}{\textrm{co}}

\newcommand{\stm}{\setminus}
\newcommand{\sbs}{\subset}
\newcommand{\mc}{\mathcal}
\newcommand{\ri}{\textrm{ri}}
\newtheorem{theo}{Theorem}
\newtheorem{lem}{Lemma}
\newtheorem{cor}{Corollary}
\newtheorem{prp}{Proposition}
\newcommand{\edem}{$\hspace{\stretch{22}}\Box$\vskip3mm}

\section{Introduction}\label{sec:introduction}
In a statistical setting, consider the nonparametric regression model 
\begin{equation}\label{eq:general-model}
Y=r(X)+\varepsilon
\end{equation}
where $X$ and $Y$ are both real-valued random variables with $X$ uniform in $[0,1]$, $\E\left[Y^2\right]<\infty$ and $\E\left[\varepsilon|X\right]=0$ (see for example \cite{Laciregressionbook}). Assume, in addition, that the regression function $r$ is right-continuous and of bounded variation. With this respect, the Jordan decomposition asserts that such a function can be written as the sum of a non-decreasing function $u$ and a non-increasing function $b$ 
\begin{equation*}
r(x)=u(x)+b(x).
\end{equation*}
Viewing this latter equation as an additive model involving the increasing part and the decreasing part of $r$, we propose a new estimator which combines two well-established tools in nonparametric regression: the isotonic regression related to the estimation of monotone functions, and the backfitting algorithm devoted to the estimation of additive models.

\medskip

The estimation of a monotone regression function dates back to the 50's and the early work by Ayer et al. \cite{ayer1955empirical}. Given a sample of independent and identically distributed (i.i.d.) random couples $(X_1,Y_1),\ldots,(X_n,Y_n)$ following the general model (\ref{eq:general-model}), denoting  $x_1=X_{(1)}<\ldots< x_n=X_{(n)}$ the ordered sample, and $y_1,\ldots, y_n$ the corresponding observations, the Pool-Adjacent-Violators Algorithm (PAVA) determines a collection of non-decreasing level sets solution to the minimization problem
\begin{equation}\label{eq:min1}
\min_{u_1\leq \ldots \leq u_n} \sum_{i=1}^n \left(y_i-u_i\right)^2.
\end{equation}
Since the cone  
\begin{equation}\label{eq:isotone-cone}
{\cal C}^+=\left\{u=\left(u_1,\ldots,u_n\right)\in \R^n:u_1\leq \ldots\leq u_n\right\}
\end{equation}
is a closed convex set in $\R^n$, there exists a unique solution to this minimization problem. This solution, called the isotonic regression of $y$ and denoted $\iso(y)$, is the metric projection of $y=(y_1,\ldots,y_n)$ on ${\cal C}^+$ with respect to the Euclidean norm, that is
\begin{equation}\label{eq:iso-y}
\iso(y)=\arg\min_{u\in {\cal C}^+}\|y-u\|^2=\arg\min_{u\in {\cal C}^+}\sum_{i=1}^n \left(y_i-u_i\right)^2.
\end{equation}
Correspondingly, the antitonic regression of $y$ is the projection of $y$ on the set of vectors with non-increasing coordinates, that is
\begin{equation*}
\anti(y)=\arg\min_{b\in {\cal C}^-}\|y-b\|^2=\arg\min_{b\in {\cal C}^-}\sum_{i=1}^n \left(y_i-b_i\right)^2
\end{equation*}
where ${\cal C}^-=-{\cal C}^+=\left\{b=\left(b_1,\ldots,b_n\right)\in \R^n:b_1\geq \ldots\geq b_n\right\}$. From now on, ${\cal C}^+$ and ${\cal C}^-$ will be called monotone cones.

\medskip
 
A major attraction of isotonic regression procedures is their simplicity. Since they are nonparametric and data driven (i.e., they do not require the tuning of any smoothing parameter), these estimators have raised considerable interest since more than fifty years. A comprehensive account on the subject can be found in \cite{barlow1972statistical}, statistical properties have been studied in \cite{brunk1955maximum}, \cite{brunk1970estimation}, \cite{wright1981asymptotic}, \cite{durot2007error}, and extensions or improvements to more general order of the PAVA approach can be found in \cite{dykstra1981isotonic}, \cite{lee1983min} and \cite{BestChakravarti1990} for example.

\medskip

Still in nonparametric statistics, but in a multidimensional context this time, the additive models were suggested by Friedman and Stuetzle \cite{friedman1981projection} and popularized by Hastie and Tibshirani \cite{hastie1990generalized} as a way to get around the so-called ``curse of dimensionality''. In brief, this means that, in multivariate smoothing, nonparametric estimators have to consider large neighborhoods of a particular point of the space to catch observations, and hence large biases can result. The additive model assumes that the regression function can be written as the sum of smooth terms in the covariates: 
\begin{equation}\label{eq:additive-models}
r(X)=\sum_{j=1}^dr_j(X^{j}).
\end{equation}

Since each variable is represented separately in (\ref{eq:additive-models}), the additive model provides a logical extension of the standard linear regression and once an additive model is fitted, one can easily interpret the role of each variable in predicting the response.

\medskip

 Buja et al. \cite{buja1989linear} proposed the backfitting algorithm as a practical method for fitting additive models. It consists in iterated fitting of the \textit{partial residuals} from earlier steps until convergence is reached. If the current estimates are $\hat{r}_1,\ldots, \hat{r}_d$, then $\hat{r}_j$ is updated by smoothing $y-\sum_{k\neq j} \hat{r}_k$ against $X^{j}$. While backfitting has attracted much attention and is frequently applied, it has been somewhat difficult to analyze theoretically. Nonetheless, when using linear smoothers in each direction, the convergence of the algorithm can be related to the spectrum of the individual smoothing matrices (see, e.g., \cite{buja1989linear} and \cite{opsomer1997fitting}), and when all the smoothers are orthogonal projections, the whole algorithm can be replaced by a global projection operator \cite{hardle1993backfitting}. 

\medskip

There exist other multivariate methods based on repeated fitting of the residuals. Some of them, like L2-boosting \cite{buhlmann2003boosting}, boosted kernel regression \cite{di2008boosting}, iterative bias reduction \cite{cornillon2011,cornillon2012iterative}, do not assume any particular structure for the regression function. The common principle of these approaches is to start out with a biased smoother or a weak learner, and then to estimate and correct the bias in an iterative manner. Hence, instead of smoothing the \textit{partial residuals}, one smoothes the \textit{global residuals} $y-\sum_k \hat{r}_k$, and then correct the previous smoother. Just as for the backfitting, the convergence of these algorithms as well as the statistical properties of these estimators have mainly been studied in the case of linear smoothers.   

\medskip

In our situation, however, it is noteworthy that projections on convex cones are not linear operators. But considering our iterative estimator as the application of Von Neumann's algorithm (see for example \cite{Dattorro2005}), we will show that iterating the procedure tends to reproduce the data. Moreover, we manage to go further in the analysis by proving that the individual terms of the sum converge as well to identified limits. This is in fact possible thanks to a result which is rather unexpected from the statistical point of view: iterating isotonic regression in a backfitting fashion or in a boosting fashion yield the same estimators at each step.

\medskip

Interestingly, this result stems from a  property of the projections onto monotone cones which, in our case, reads as follows (recall that $\iso(y)$ is defined by (\ref{eq:iso-y})):
\begin{equation}\label{eq:isotonicity} 
\forall (y,\tilde{y})\in\R^n\times\R^n,\qquad y-\tilde{y}\in{\cal C}^+\ \Rightarrow\ \iso(y)-\iso(\tilde{y})\in {\cal C}^+
\end{equation}
From a more general perspective, one can see this equation as a particular case of the property of isotonicity of the projection onto convex cones. Here isotonicity is considered with respect to the order induced by the cone. The idea to relate the ordering induced by a convex cone and the metric projection onto the convex cone goes back to the paper by Isac and N\'emeth \cite{isac1986monotonicity}, where a convex cone in the Euclidean space which admits an isotone projection onto it (called by the authors \emph{isotone projection cone}) was characterized. Thereafter, this notion was considered in the complementarity theory to provide new existence results and iterative methods \cite{IsacNemeth1990b,IsacNemeth1990c,Nemeth2009a}.

\medskip

Yet, the notion of the cone in the above cited papers is used in the sense of ``closed convex pointed cone''. Confronted with the question if the monotone cones ${\cal C}^-$ and ${\cal C}^+$, which are not pointed, admit or not isotonic metric projections, we shall develop in Section \ref{sec:isotonicity-projection} a general theory in order to apply it to this special case. This seems to be the simplest way to tackle this problem. Therefore, Theorem \ref{fo} below is interesting by itself. By using this approach, Corollary \ref{coriso} states that the monotone cones ${\cal C}^-$ and ${\cal C}^+$ admit isotone metric projections.

\medskip

Then, we come back to the statistical framework, the remainder of the paper being organized as follows: the definitions, the analysis and the equivalence of the Iterative Isotone Regression estimator and the Iterative Isotone Bias Reduction estimator are detailed in Section \ref{sec:iir}. 
As iterating these algorithms tends to reproduce the data, we then explain how the procedure might be stopped in practice (Section \ref{sec:simulation}). Finally, most of the proofs are gathered in the Appendix. 

\medskip

To conclude this introduction, we would like to make a few comments on the topics that will {\bf not} be addressed in the present document. Starting from a sample ${\cal D}_n=\{(X_1,Y_1),\dots,(X_n,Y_n)\}$ of i.i.d. random couples with the same distribution as a generic pair $(X,Y)$, our estimator takes the form $\hat{r}_n^{(k_n)}$, where $k_n$ denotes the number of iterations possibly depending on the sample size $n$. In this framework, an important aspect is to specify conditions on the regression model (\ref{eq:general-model}) and on the sequence $(k_n)$ so that, for example, 
$$\E\left[\left(r(X)-\hat{r}_n^{(k_n)}(X)\right)^2\right]\xrightarrow[n\to\infty]{}0,$$
where the expectation $\E[.]$ is considered with respect to the sample ${\cal D}_n$ and the generic variable $X$. If this property is satisfied, we say that the regression function $\hat{r}_n^{(k_n)}$ is consistent. It is universally consistent if this property is true for all distributions of $(X,Y)$ with $\E[Y^2]<\infty$ (see for example \cite{Laciregressionbook}). This important issue will not be pursued further here and will be addressed elsewhere by the authors.

\section{Isotonicity of the projection onto the monotone cones}\label{sec:isotonicity-projection}
It turns out that the isotonicity of the projection, as defined in (\ref{eq:isotonicity}) for the specific case of the cone ${\cal C}^+$, is in general a very strong requirement which implies the latticiality of the order induced by the convex cone. Thus, the investigation of the isotone projection cones becomes part of the theory of latticially ordered Euclidean and Hilbert spaces. A simple finite method of projection onto isotone projection cones proposed in \cite{NemethNemeth2009} has become important in the effective handling of all the problems involving projection onto these cones. Besides nonlinear complementarity, isotone projection cones have applications in other domains of optimization theory.
The method proposed in \cite{NemethNemeth2009} has become important in the effective handling of the problem of map-making from relative distance information e.g., stellar cartography, see Section 5.13.2.4 in \cite{Dattorro2005} and
{\small
\begin{verbatim}
www.convexoptimization.com/wikimization/index.php/Projection_on_Polyhedral_Cone 
\end{verbatim}}
Although we shall not consider projection methods here, we stress that some of the results developed in \cite{NemethNemeth2009} will be useful in our proofs. 

\medskip

Let us first introduce some notations. If ${\cal C}$ is a non-empty, closed convex set in $\R^n$, then for each $x\in \R^n$ there exists a unique nearest point $P_{\cal C}x\in {\cal C}$, that is, a point with the property that
$$\|x-P_{\cal C}x\|=\inf \{\|x-c\|:\;c\in {\cal C}\},$$
where $\|.\|$ stands for the Euclidean norm in $\R^n$ \cite{Zarantonello1971}.
 The mapping $P_{\cal C}:\R^n\to {\cal C}$ is called the nearest point mapping of $\R^n$ onto ${\cal C}$ or simply the (metric) projection onto ${\cal C}$. Let ${\cal C}$ be a closed convex cone in $\R^n$, i.e., a closed nonempty set with $(i)\ {\cal C}+{\cal C}\subset {\cal C}$, and $(ii)\ t{\cal C}\subset {\cal C},\;\forall \;t\in \R_+ =[0,+\infty)$. If ${\cal C}\cap (-{\cal C})=\{0\}$, then ${\cal C}$ is called a closed convex pointed cone. In order to lighten the writings, and since all sets at hand in the following will be closed and convex, we propose to call them respectively ``cone'' and ``pointed cone''. 

\begin{lemma}\label{ossz}
Suppose that ${\cal C}$ is a cone, denote $L={\cal C}\cap (-{\cal C})$ the maximal subspace contained in ${\cal C}$, $L^\perp$
its orthogonal complement, and $K=L^\perp \cap {\cal C}$. Then, $K$ is a pointed cone in $L^\perp$,
\begin{equation}\label{ortossz}
{\cal C}=K\oplus L
\end{equation}
where $\oplus$ stands for the orthogonal sum, and
\begin{equation}\label{projek}
P_{\cal C}x=P_Kx_k+x_l
\end{equation}
where $x=x_k+x_l$ with $x_l\in L$ and $x_k\in L^\perp.$
\end{lemma}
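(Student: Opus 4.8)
The plan is to decompose $\R^n$ into $L \oplus L^\perp$ and analyze the projection coordinate by coordinate. First I would verify the structural claims about $K = L^\perp \cap {\cal C}$: that it is a cone in $L^\perp$ is immediate (intersection of two convex cones), and it is pointed because $K \cap (-K) \subseteq ({\cal C}\cap(-{\cal C})) \cap L^\perp = L \cap L^\perp = \{0\}$. For the decomposition \eqref{ortossz}, the inclusion $K \oplus L \subseteq {\cal C}$ follows from ${\cal C}+{\cal C}\subseteq{\cal C}$ and $\pm L\subseteq{\cal C}$; conversely, given $c \in {\cal C}$, write $c = c_k + c_l$ with $c_l\in L$, $c_k\in L^\perp$, and observe $c_k = c + (-c_l) \in {\cal C}+L\subseteq{\cal C}$, hence $c_k\in L^\perp\cap{\cal C}=K$. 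The sum is orthogonal by construction, so this settles \eqref{ortossz}.

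For \eqref{projek}, the strategy is to use the variational characterization of the projection together with the orthogonal splitting. Fix $x = x_k + x_l$ with $x_l\in L$, $x_k\in L^\perp$. Any $c\in{\cal C}$ splits uniquely as $c = c_k + c_l$ with $c_k\in K$ and $c_l\in L$, and by orthogonality
\begin{equation*}
\|x-c\|^2 = \|x_k-c_k\|^2 + \|x_l-c_l\|^2.
\end{equation*}
The two terms can be minimized independently: the first over $c_k\in K$, giving $P_K x_k$; the second over $c_l\in L$, and since $L$ is a subspace containing $x_l$, the minimizing choice is $c_l = x_l$, making that term vanish. Hence the minimizer of $\|x-c\|^2$ over ${\cal C}$ is $c = P_K x_k + x_l$, which is exactly \eqref{projek}. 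Uniqueness is guaranteed since ${\cal C}$ is closed and convex.

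The one point requiring a little care — and the main (minor) obstacle — is justifying that the minimization genuinely separates, i.e. that as $c$ ranges over all of ${\cal C}$, the pair $(c_k, c_l)$ ranges over all of $K\times L$ independently. This is precisely what \eqref{ortossz} provides: every $(c_k,c_l)\in K\times L$ yields an admissible $c = c_k+c_l\in{\cal C}$, and conversely. Once that correspondence is in hand, the Pythagorean split of the squared norm makes the argument routine. I would therefore present \eqref{ortossz} first and invoke it explicitly when performing the separation, rather than re-deriving the membership facts inside the projection computation.
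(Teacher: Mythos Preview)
Your argument is correct. The structural claims and the decomposition \eqref{ortossz} are handled essentially as in the paper (indeed more carefully: the paper simply asserts that \eqref{ortossz} follows from ${\cal C}={\cal C}\cap(L^\perp\oplus L)$, whereas you spell out why the $L^\perp$-component of any $c\in{\cal C}$ lands in $K$).

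The genuine difference is in how you establish the projection formula \eqref{projek}. The paper invokes the variational characterization of $P_{\cal C}$ from \cite{Zarantonello1971}, namely the pair of conditions $\langle x-P_{\cal C}x,y\rangle\leq 0$ for all $y\in{\cal C}$ and $\langle x-P_{\cal C}x,P_{\cal C}x\rangle=0$, and checks directly that $P_Kx_k+x_l$ satisfies both. You instead go back to the definition of $P_{\cal C}$ as the nearest point, split $\|x-c\|^2$ by Pythagoras, and minimize the two orthogonal pieces independently over $K\times L$. Your route is arguably more elementary and self-contained, since it never needs the variational characterization. The paper's route, on the other hand, introduces conditions \eqref{elso}--\eqref{masod} which are reused later (Section~\ref{sec:identifiability}) to derive the identifiability conditions, so there is some economy in stating them here. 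Either approach is perfectly adequate for the lemma itself.
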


By putting $u\leq_{\cal C} v$ whenever $u,\;v\in \R^n$ and $v-u\in {\cal C}$, the cone ${\cal C}\subset \R^n$ induces a semi-order $\leq_{\cal C}$ in $\R^n$ which is translation invariant (i.e. $u\leq_{\cal C} v$ implies $u+z\leq_{\cal C} v+z$ for any $z\in \R^n$) and scale invariant (i.e. $u\leq_{\cal C} v$ implies $tu\leq_{\cal C} tv$ for any $t\in \R_+$).

\medskip

The projection $P_{\cal C}$ is said \emph{${\cal C}$-isotone} if $u,\;v\in \R^n,\;u\leq_{\cal C}v$ implies $P_{\cal C}u\leq_{\cal C} P_{\cal C}v.$ If $P_{\cal C}$ is ${\cal C}$-isotone, then ${\cal C}$ is called an \emph{isotone projection cone}. At this point, we would like to emphasize that in \cite{isac1986monotonicity}, the authors investigate isotone projection properties only for pointed cones. Our purpose in this section is thus to generalize their results to cones which are not necessarily pointed, hence introducing of the decomposition ${\cal C}=K\oplus L$ in Lemma \ref{ossz}, where $K$ is a pointed cone. 

\begin{theorem}\label{fo}
Let ${\cal C}\subset \R^n$ be a cone,
$${\cal C}=K\oplus L$$
with $L={\cal C}\cap (-{\cal C})$ and $K={\cal C}\cap L^\perp$.
Then, ${\cal C}$ is an isotone projection cone if and only if
the pointed cone $K\subset L^\perp$ is an isotone projection cone in $L^\perp$.
\end{theorem}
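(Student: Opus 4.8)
The plan is to transfer everything to the subspace $L^\perp$ using the decomposition of Lemma \ref{ossz}, so that ${\cal C}$-isotonicity of $P_{\cal C}$ becomes, verbatim, the $K$-isotonicity of $P_K$ in $L^\perp$. Throughout, for $x\in\R^n$ I write $x=x_k+x_l$ for its orthogonal decomposition along $\R^n=L^\perp\oplus L$, as in Lemma \ref{ossz} (so $x_k=P_{L^\perp}x\in L^\perp$ and $x_l=P_{L}x\in L$); the map $x\mapsto x_k$ is linear and maps $\R^n$ onto $L^\perp$. Moreover, since $K\subset L^\perp$ and $L^\perp$ carries the Euclidean structure inherited from $\R^n$, the point $P_Kx_k$ occurring in (\ref{projek}) is the same whether the projection onto $K$ is computed in $\R^n$ or in $L^\perp$.

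First I would establish the auxiliary equivalence: for every $z\in\R^n$,
\[ z\in{\cal C}\ \Longleftrightarrow\ z_k\in K. \]
One direction is immediate: if $z_k\in K$ then $z=z_k+z_l\in K+L={\cal C}$ by (\ref{ortossz}). For the other, a $z\in{\cal C}=K\oplus L$ splits as (element of $K\subset L^\perp$) $+$ (element of $L$), and since $K\cap L=\{0\}$ and $K\perp L$, this must coincide with its orthogonal splitting along $L^\perp\oplus L$, whence $z_k\in K$. Applying this with $z=v-u$ and using linearity of $x\mapsto x_k$ translates the cone order: $u\leq_{\cal C}v\Leftrightarrow v_k-u_k\in K$.

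Next I would feed the projection formula (\ref{projek}) into the definition of isotonicity: for $u,v\in\R^n$,
\[ P_{\cal C}v-P_{\cal C}u=(P_Kv_k-P_Ku_k)+(v_l-u_l), \]
and this is an orthogonal splitting because $P_Kv_k-P_Ku_k\in L^\perp$ while $v_l-u_l\in L$; hence by the auxiliary equivalence, $P_{\cal C}u\leq_{\cal C}P_{\cal C}v\Leftrightarrow P_Kv_k-P_Ku_k\in K$. Combining the two translations, $P_{\cal C}$ is ${\cal C}$-isotone if and only if, for all $u,v\in\R^n$, $v_k-u_k\in K$ implies $P_Kv_k-P_Ku_k\in K$. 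Since $(u,v)\mapsto(u_k,v_k)$ maps $\R^n\times\R^n$ onto $L^\perp\times L^\perp$, this is exactly the assertion that $b-a\in K\Rightarrow P_Kb-P_Ka\in K$ for all $a,b\in L^\perp$, i.e. that $K$ is an isotone projection cone in $L^\perp$. As every step is an equivalence, both implications of the theorem come out at once.

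The only delicate point is the auxiliary equivalence $z\in{\cal C}\Leftrightarrow z_k\in K$: it rests on the \emph{orthogonality} of the sum ${\cal C}=K\oplus L$ furnished by Lemma \ref{ossz} (which makes the intrinsic $K/L$-splitting of a point of ${\cal C}$ agree with its orthogonal splitting along $L^\perp\oplus L$) and on $L$ being a genuine subspace rather than merely a cone. Granting Lemma \ref{ossz} and the formula (\ref{projek}), everything else is routine linear algebra, so I do not anticipate any further obstacle.
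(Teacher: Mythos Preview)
Your proof is correct and follows essentially the same approach as the paper's: both rely on Lemma \ref{ossz} (the orthogonal decomposition ${\cal C}=K\oplus L$ and the projection formula (\ref{projek})) to reduce ${\cal C}$-isotonicity of $P_{\cal C}$ to $K$-isotonicity of $P_K$ in $L^\perp$. The only difference is organizational: the paper treats the two implications separately, whereas you package the key observation as the single equivalence $z\in{\cal C}\Leftrightarrow z_k\in K$ and obtain both directions at once, which is a slightly cleaner presentation of the same argument.
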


A simple geometric characterization of the isotone projection (pointed) cones was given in \cite{isac1986monotonicity}. It uses the notion of the polar of a cone. If ${\cal C}\subset \R^n$ is a cone, then the set
$${\cal C}^{\perp}=\{y\in \R^n:\;\langle x,y\rangle \leq 0, \;\forall \;x\in {\cal C}\},$$
is called the \emph{polar} of the cone  ${\cal C}$. The set ${\cal C}^{\perp}$ is obviously a cone. If the cone ${\cal C}$ is \emph{generating} in the sense that ${\cal C}-{\cal C}=\R^n$, then the polar ${\cal C}^{\perp}$ is a pointed cone. We have the following easily verifiable result:

\begin{lemma}\label{mineknev}
Suppose that ${\cal C}$ is a generating cone. Using the notations
introduced in the Theorem \ref{fo}, and denoting the polar of the pointed cone $K$
in the subspace $L^{\perp}$ by $K^{\perp}$, we have the relation
$${\cal C}^\perp=iK^{\perp},$$
where $i$ is the inclusion mapping of $L^{\perp}$ into $\R^n.$
\end{lemma}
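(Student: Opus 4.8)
The plan is to establish the set identity ${\cal C}^\perp = iK^\perp$ by a short double-inclusion argument, using only the orthogonal decomposition ${\cal C}=K\oplus L$ supplied by Lemma \ref{ossz} together with the fact that $L$ is a linear subspace.

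First I would observe that ${\cal C}^\perp\subset L^\perp$. Indeed, since $L\subset {\cal C}$, any $z\in{\cal C}^\perp$ satisfies $\langle x,z\rangle\le 0$ for every $x\in L$; applying this to both $x$ and $-x$, which is legitimate because $L$ is a subspace, forces $\langle x,z\rangle=0$ for all $x\in L$, i.e. $z\in L^\perp$. This is what makes the comparison with $iK^\perp\subset L^\perp$ meaningful. Next, I fix $z\in L^\perp$ and write an arbitrary $x\in{\cal C}$ as $x=x_k+x_l$ with $x_k\in K={\cal C}\cap L^\perp$ and $x_l\in L$, as permitted by (\ref{ortossz}); since $z\perp L$ this gives $\langle x,z\rangle=\langle x_k,z\rangle$. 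Because $K\subset{\cal C}$, the component $x_k$ ranges over all of $K$ as $x$ ranges over ${\cal C}$, so the condition ``$\langle x,z\rangle\le0$ for all $x\in{\cal C}$'' is equivalent to ``$\langle x_k,z\rangle\le0$ for all $x_k\in K$'', that is, to $z$ lying in the polar $K^\perp$ of $K$ taken inside $L^\perp$. Combining the two steps, $z\in{\cal C}^\perp$ holds if and only if $z\in L^\perp$ and $z\in K^\perp$, i.e. if and only if $z\in iK^\perp$, which is the asserted relation.

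There is essentially no real obstacle here — this is the ``easily verifiable'' statement the text advertises. The only point deserving care is bookkeeping about the ambient space in which each polar is computed: ${\cal C}^\perp$ is a polar in all of $\R^n$ whereas $K^\perp$ is a polar in the proper subspace $L^\perp$, and the inclusion map $i$ in the statement is precisely what records this distinction. I would also remark, for consistency with the paragraph preceding the lemma, that the hypothesis that ${\cal C}$ is generating is equivalent to $K$ being generating in $L^\perp$: from ${\cal C}-{\cal C}=(K-K)\oplus L$ one reads off that ${\cal C}-{\cal C}=\R^n$ if and only if $K-K=L^\perp$, and this is exactly what guarantees that the polar $K^\perp$ is itself a pointed cone, in agreement with the earlier remark on polars of generating cones.
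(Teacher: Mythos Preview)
Your argument is correct and complete. The paper does not actually give a proof of this lemma --- it merely introduces it as an ``easily verifiable result'' --- so there is nothing to compare against beyond noting that your double-inclusion via $\mathcal{C}^\perp\subset L^\perp$ and the decomposition $\langle x,z\rangle=\langle x_k,z\rangle$ is exactly the routine verification the authors had in mind; your closing remark that the generating hypothesis is not used in the identity itself but only serves to make $K^\perp$ pointed is also accurate and worth recording.
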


Putting together the main result in \cite{isac1986monotonicity}, Theorem \ref{fo} and Lemma \ref{mineknev}, we have the following conclusion:

\begin{corollary}
The generating cone ${\cal C}$ is an isotone projection cone if and only if
its polar ${\cal C}^{\perp}$ is a cone generated by linearly independent
vectors forming mutually non-acute angles. 
\end{corollary}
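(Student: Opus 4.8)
The plan is to chain together three equivalences: Theorem~\ref{fo}, the Isac--N\'emeth geometric characterization of isotone projection \emph{pointed} cones from \cite{isac1986monotonicity}, and the identification of polars given by Lemma~\ref{mineknev}. The point is that the Isac--N\'emeth result applies directly only to generating pointed cones, so the work is to reduce the statement about the (non-pointed) generating cone ${\cal C}\subset\R^n$ to the pointed cone $K={\cal C}\cap L^\perp$ sitting inside the Euclidean space $L^\perp$.

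First I would check that $K$ is itself generating in $L^\perp$. From ${\cal C}=K\oplus L$ one gets ${\cal C}-{\cal C}=(K-K)\oplus L$; since ${\cal C}$ is generating the left-hand side is all of $\R^n$, and comparing orthogonal decompositions forces $K-K=L^\perp$. Hence $K$ is a generating pointed cone in $L^\perp$, a Euclidean space of dimension $\dim L^\perp$, and the hypotheses of the main theorem of \cite{isac1986monotonicity} are met for $K$ inside $L^\perp$. Applying that theorem, $K$ is an isotone projection cone in $L^\perp$ if and only if its polar $K^\perp$ taken within $L^\perp$ is generated by linearly independent vectors forming pairwise non-acute angles.

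It then remains to transport this back to $\R^n$. Theorem~\ref{fo} gives that ${\cal C}$ is an isotone projection cone in $\R^n$ if and only if $K$ is an isotone projection cone in $L^\perp$, and Lemma~\ref{mineknev} states ${\cal C}^\perp=iK^\perp$, where $i$ is the inclusion of $L^\perp$ into $\R^n$. Since $i$ is an isometric embedding it preserves inner products, hence both linear independence and the sign of the pairwise angles of any finite family of vectors; therefore $K^\perp$ is generated by linearly independent mutually non-acute vectors if and only if ${\cal C}^\perp$ is. Concatenating the three equivalences yields the claim. The only genuinely delicate point is the bookkeeping of the ambient space: one must compute the polar in $L^\perp$ before embedding it into $\R^n$, and one must verify that $K$ stays generating in $L^\perp$ so that the cited characterization is legitimately applicable; everything else is a formal composition of results already established.
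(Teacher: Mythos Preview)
Your proof is correct and follows exactly the route the paper indicates: the paper simply says the corollary results from ``putting together the main result in \cite{isac1986monotonicity}, Theorem \ref{fo} and Lemma \ref{mineknev}'', and you have spelled out precisely how those pieces fit together. Your verification that $K$ is generating in $L^\perp$ and your remark that the isometric inclusion $i$ preserves linear independence and angle signs are the only details the paper leaves implicit, and both are handled correctly.
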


Let us focus our attention on the monotone cone 
\begin{equation*}
{\cal {\cal C}}^-=\{b\in \R^n:\;b_1\geq b_2\geq ...\geq b_n\}
\end{equation*}
It is readily seen that ${\cal {\cal C}}^-$ is a generating cone, but not a pointed cone. Specifically, let
 \begin{equation*}
L={\cal {\cal C}}^-\cap(-{\cal {\cal C}}^-)=\{x\in \R^n:\;x_1=x_2=...=x_n\},
\end{equation*}


then $L\subset {\cal {\cal C}}^-$, the maximal subspace contained in ${\cal {\cal C}}^-$, is of dimension one. We have also that
\begin{equation}\label{subcone}
K=L^\perp \cap {\cal C}^-
\end{equation}
is an $(n-1)$-dimensional pointed cone in the hyperplane $L^\perp$ and
\begin{equation*}
{\cal {\cal C}}^-={\cal C}^-\cap (L^\perp \oplus L)=K\oplus L.
\end{equation*}
We will prove that the pointed cone $K$ given by (\ref{subcone}) is an isotone projection cone in $L^\perp$, hence the following result.

\begin{corollary}\label{coriso}
The monotone cones ${\cal C}^-$ and ${\cal C}^+$ admit isotone projections.
\end{corollary}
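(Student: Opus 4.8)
The plan is to apply the geometric characterization of isotone projection cones recalled just above (the Corollary obtained by combining the main result of \cite{isac1986monotonicity} with Theorem \ref{fo} and Lemma \ref{mineknev}) to an explicit description of the polar of $\mathcal{C}^-$, and then to transfer the conclusion to $\mathcal{C}^+$ using $\mathcal{C}^+=-\mathcal{C}^-$. Equivalently --- and this is the route announced before the statement --- one may work with the pointed cone $K=L^\perp\cap\mathcal{C}^-$ inside $L^\perp$: by Lemma \ref{mineknev} the polar of $K$ in $L^\perp$ is spanned by exactly the same vectors that generate $(\mathcal{C}^-)^\perp$, so the two approaches reduce to a single computation.

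First I would put the monotone cone in facet form. Let $e_1,\dots,e_n$ be the canonical basis of $\R^n$ and set $v_i=e_i-e_{i+1}$ for $i=1,\dots,n-1$, so that $\mathcal{C}^-=\{b\in\R^n:\langle b,v_i\rangle\geq 0,\ i=1,\dots,n-1\}$; recall that this cone is generating. By polarity of polyhedral cones its polar is $(\mathcal{C}^-)^\perp=\cone\{-v_1,\dots,-v_{n-1}\}$, that is, $(\mathcal{C}^-)^\perp$ is generated by the vectors $w_i:=e_{i+1}-e_i$, $i=1,\dots,n-1$. Each $w_i$ has zero coordinate sum, hence lies in $L^\perp$, and by Lemma \ref{mineknev} the family $w_1,\dots,w_{n-1}$ also generates $K^\perp$ inside $L^\perp$.

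It then remains to verify the two requirements of the characterization for the generating family $\{w_1,\dots,w_{n-1}\}$. Linear independence follows by a triangular argument: in a relation $\sum_i\lambda_i w_i=0$ the first coordinate forces $\lambda_1=0$, then the second forces $\lambda_2=0$, and so on. For the angles, a direct computation gives $\langle w_i,w_j\rangle=-1$ when $|i-j|=1$ and $\langle w_i,w_j\rangle=0$ when $|i-j|\geq 2$; in particular $\langle w_i,w_j\rangle\leq 0$ for all $i\neq j$, so the generators are pairwise non-acute. Consequently $(\mathcal{C}^-)^\perp$ --- equivalently $K^\perp$ viewed in $L^\perp$ --- is a cone generated by linearly independent vectors forming mutually non-acute angles, and the characterization yields that $\mathcal{C}^-$ is an isotone projection cone (equivalently, that $K$ is an isotone projection cone in $L^\perp$, whence the same conclusion for $\mathcal{C}^-$ through Theorem \ref{fo}).

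Finally I would pass to $\mathcal{C}^+=-\mathcal{C}^-$. Using the elementary identity $P_{-\mathcal{C}}(x)=-P_{\mathcal{C}}(-x)$ together with the equivalence $u\leq_{\mathcal{C}^+}v\Leftrightarrow -u\leq_{\mathcal{C}^-}-v$, one starts from $u\leq_{\mathcal{C}^+}v$, applies $\mathcal{C}^-$-isotonicity of $P_{\mathcal{C}^-}$ to the relation $-u\leq_{\mathcal{C}^-}-v$ to obtain $P_{\mathcal{C}^-}(-u)\leq_{\mathcal{C}^-}P_{\mathcal{C}^-}(-v)$, and reads this back as $P_{\mathcal{C}^+}(u)\leq_{\mathcal{C}^+}P_{\mathcal{C}^+}(v)$. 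I do not expect a real obstacle here: the conceptual content has been absorbed into Theorem \ref{fo}, and the only points that require a little care are that the Isac--N\'emeth characterization must be applied inside the subspace $L^\perp$ (which is precisely why Theorem \ref{fo} and Lemma \ref{mineknev} are needed) and that $K$ is indeed generating in $L^\perp$, which holds since $K-K=\spa\{w_1,\dots,w_{n-1}\}=L^\perp$.
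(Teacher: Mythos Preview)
Your argument is correct and lands on exactly the same endgame as the paper: the polar is generated by the vectors $w_i=e_{i+1}-e_i$ (the paper calls them $u_i$), which are linearly independent and satisfy $\langle w_i,w_j\rangle\leq 0$ for $i\neq j$, whence isotonicity via the Isac--N\'emeth criterion.

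The route, however, is more economical than the paper's. The paper works inside $L^\perp$ with Theorem \ref{fo} directly: it first constructs explicit generators $e'_1,\dots,e'_{n-1}$ of the pointed cone $K=L^\perp\cap\mathcal{C}^-$, proves the identity $K=\cone\{e'_1,\dots,e'_{n-1}\}$ via the representation \eqref{kifejez}--\eqref{kesz}, and only then appeals to the duality relations of \cite{NemethNemeth2009} to recognize $\cone\{u_1,\dots,u_{n-1}\}$ as $K^\perp$. You bypass this entire detour by applying the unnamed Corollary (the packaged combination of Theorem \ref{fo}, Lemma \ref{mineknev}, and \cite{isac1986monotonicity}) together with the standard bipolar description of a polyhedral cone, which delivers $(\mathcal{C}^-)^\perp=\cone\{w_1,\dots,w_{n-1}\}$ in one stroke; your observation that each $w_i$ lies in $L^\perp$ is what makes Lemma \ref{mineknev} kick in and legitimizes working in $L^\perp$. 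What the paper gains from its longer path is an explicit set of generators for $K$ itself (the $e'_j$), which is not needed for the corollary but may be of independent interest; what you gain is a shorter, cleaner proof. Your explicit transfer from $\mathcal{C}^-$ to $\mathcal{C}^+$ via $P_{-\mathcal{C}}(x)=-P_{\mathcal{C}}(-x)$ is also a nice addition over the paper's one-line ``it clearly suffices''.
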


This result will be the basic ingredient to show the equality between the algorithms \textsf{I.I.R.} and \textsf{I.I.B.R.} presented in the next section (see Theorem \ref{theo:egalite-algos}).

\section{Iterative Isotone Regression}\label{sec:iir}
In this section, we first present the algorithm called Iterative Isotonic Regression (\textsf{I.I.R.} in short) which proceeds in alternating isotonic and antitonic regressions in a backfitting fashion. The connection with Von Neumann's algorithm and recent results in this topic will allow us to exhibit the limit of this estimator when the number of iterations goes to infinity.

\medskip

Let us first briefly recall the statistical framework and the idea behind our algorithm. We consider the nonparametric regression model 
\begin{equation}\label{eq:modele-generaliir}
Y=r(X)+\varepsilon
\end{equation}
where the random variable $X$ is, for example, uniform in $[0,1]$. Classical hypothesis for studying the consistency of an estimator of $r$ are $\E\left[Y^2\right]<\infty$ and $\E\left[\varepsilon|X\right]=0$, but this will not play a prominent role in the following since we will not investigate the statistical properties of \textsf{I.I.R}. More important, we will assume that the regression function $r$ is right-continuous and of bounded variation. Then, the Jordan decomposition asserts that such a function can be written as the sum of a non-decreasing and a non-increasing function: $r(x)=u(x)+b(x)$. Specifically, if we impose that $u$ and $b$ have singular associated Stieltjes measures and, for example, that 
\begin{equation}\label{eq:hyp-identifiabilite}
\int_0^1u(x)\,dx=\int_0^1r(x)\,dx,
\end{equation}
then the decomposition is unique. In this case, we adopt the following notation
\begin{equation}\label{eq:jordanmin}
\forall x\in[0,1]\hspace{1cm}r(x)=u^{\star}(x)+b^{\star}(x),
\end{equation}
 and we call this latter writing the Jordan minimum variation decomposition of $r$.

\subsection{Framework, notations and convergence}
In a statistical setting, we shall use a sample of i.i.d. random couples $(X_1,Y_1),\ldots,(X_n,Y_n)$ following the model (\ref{eq:modele-generaliir}) and try to estimate the regression function $r$. Viewing $u^{\star}$ and $b^{\star}$ as the components of an additive model involving two monotone terms, the idea of \textsf{I.I.R.} is to apply alternatively isotonic and antitonic regressions. Recall that $x_1=X_{(1)}<\ldots< x_n=X_{(n)}$ denotes  the ordered sample, that $y_1,\ldots, y_n$ are the corresponding observations and that $\iso(y)$ (resp. $\anti(y)$) is the isotonic (resp. antitonic) regression of $y=\left(y_1,\ldots,y_n\right)'$.
\medskip 

Besides, if $z=\left(z_1,\ldots,z_n\right)'$, the notation $\Delta(z)$ stands for the $(n-1)$ dimensional vector defined as 
\begin{equation*}
\Delta(z)=\left(z_2-z_1,\ldots,z_n-z_{n-1}\right)'.
\end{equation*} 
Considering two vectors $z$ and $\tilde{z}$, we may write $\Delta(z)\circ\Delta(\tilde{z})$ for their term-by-term (or Hadamard) product, which means 
\begin{equation*}
\Delta(z)\circ\Delta(\tilde{z})=\left((z_2-z_1)\times(\tilde{z}_2-\tilde{z}_1),\ldots,(z_n-z_{n-1})\times(\tilde{z}_n-\tilde{z}_{n-1})\right)'. 
\end{equation*}
If $\Delta(z)\circ\Delta(\tilde{z})=0$, we will say that $z$ and $\tilde{z}$ have singular variations. 
\medskip

We are now in a position to specify the Iterative Isotonic Regression algorithm.

\begin{algorithm}[H]                      
\caption{Iterative Isotonic Regression (\textsf{I.I.R.})}          
\label{algo:iir}                           
\begin{algorithmic}                   
\STATE (1) Initialization:
 \begin{align*}
\hat{b}^{(0)}&=\left(\hat{b}^{(0)}_1,\ldots,\hat{b}^{(0)}_n\right)'=0
 \end{align*} 
\STATE (2) Cycle: for $k\geq 1$\\

\begin{equation}\label{eq:cycle}
\begin{array}{ll}
\hat{u}^{(k)}&=\iso\left(y-\hat{b}^{(k-1)}\right)\\
\hat{b}^{(k)}&=\anti\left(y-\hat{u}^{(k)}\right)\\
\hat{y}^{(k)}&=\hat{u}^{(k)}+\hat{b}^{(k)}.
\end{array}
\end{equation}
\STATE (3) Iterate (2) until a stopping condition to be specified.   
\end{algorithmic}
\end{algorithm}

We prove in the Appendix (see Section \ref{sec:identifiability}) that at each iteration $k\geq 1$,
\begin{equation}\label{eq:identifiabilite}
\left\{\begin{array}{l}
\Delta\left(\hat{u}^{(k)}\right)\circ \Delta\left(\hat{b}^{(k)}\right)=0\\
\bar{\hat{u}}^{(k)}=\bar{y} \\
\bar{\hat{b}}^{(k)}=0,
\end{array}\right.
\end{equation}
where $\bar{y}:=(\sum_{i=1}^ny_i)/n$ stands for the empirical mean of $y$. These equations ensure the identifiability of the decomposition $\hat{y}^{(k)} = \hat{u}^{(k)}+\hat{b}^{(k)}$. One might indeed consider them as the translation of conditions (\ref{eq:hyp-identifiabilite}) in a discrete context. In other words, given $\hat{y}^{(k)}$, both vectors $\hat{u}^{(k)}$ and $\hat{b}^{(k)}$ are uniquely specified.

\medskip

Figure \ref{fig:iir-interpolation} illustrates the application of the \textsf{I.I.R.} algorithm on the example of the fonction $r$ which is drawn on the top left hand side of the figure. Still on the top left hand side of the figure, we have also plotted $n=100$ points $(x_i,y_i)$ to obtain our sample: for each point, one has  $y_i=r(x_i)+\varepsilon_i$, where $\varepsilon_i$ is a Gaussian centered random variable. Besides these sample points, the three other figures show the estimations $\hat{y}^{(k)}$ with $k=1, 10, 1000$ iterations.     

\begin{figure}[H]
\begin{center}
\input{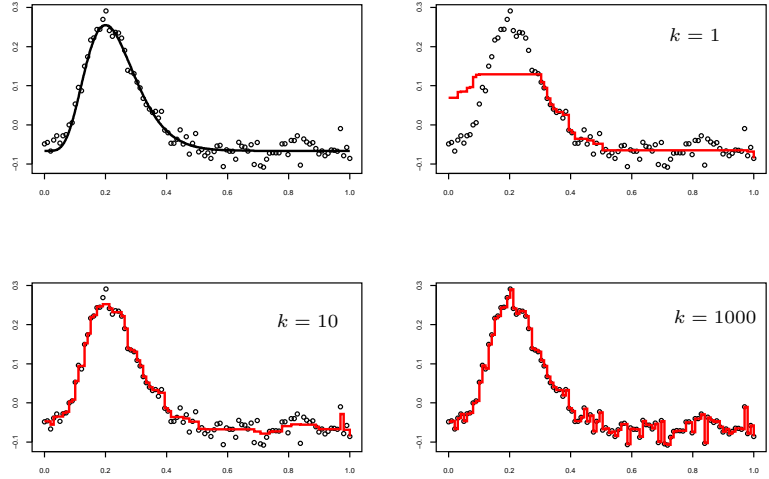}
\end{center}
\caption{Application of the \textsf{I.I.R.} algorithm for $k=1,10,1000$.}
\label{fig:iir-interpolation}
\end{figure}

One can see that for each iteration the method fits a piecewise constant function to the data and that increasing the number of iterations leads to increase the number of level sets (clusters). It also appears on this example that iterating the algorithm leads to the interpolation of the original data. This property is always true, as established by the following result.

\begin{proposition}\label{theo:iir-interpolation1}
With the previous notations, one has
$$\lim_{k\rightarrow \infty}\hat{y}^{(k)}=y.$$
\end{proposition}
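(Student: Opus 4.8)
The plan is to recognize Algorithm \ref{algo:iir} as an instance of Von Neumann's alternating projection scheme between the two closed convex cones ${\cal C}^+$ and ${\cal C}^-$, and to invoke the known convergence theory for that scheme. Concretely, set $P^+=P_{{\cal C}^+}=\iso$ and $P^-=P_{{\cal C}^-}=\anti$. The recursion (\ref{eq:cycle}) produces $\hat{u}^{(k)}=P^+(y-\hat{b}^{(k-1)})$ and $\hat{b}^{(k)}=P^-(y-\hat{u}^{(k)})$; I would first reformulate this in a single sequence. Writing $r^{(k)}=y-\hat{u}^{(k)}-\hat{b}^{(k)}=y-\hat{y}^{(k)}$ for the global residual and tracking the pair $(y-\hat{b}^{(k-1)})$, one checks by a short algebraic manipulation that the successive ``corrected data'' vectors are obtained by alternately projecting onto affine translates of ${\cal C}^+$ and ${\cal C}^-$ through $y$, equivalently that the residuals $r^{(k)}$ are obtained by alternately projecting onto the polar cones $({\cal C}^+)^\perp$ and $({\cal C}^-)^\perp$. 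This is the reduction I expect to be the cleanest route, because Von Neumann's theorem (in the form recalled, e.g., in \cite{Dattorro2005}) then tells us that such alternating projections between two closed convex cones converge to the projection onto the intersection of the corresponding polars.

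Second, I would compute the relevant intersection. Since $({\cal C}^+)^\perp + ({\cal C}^-)^\perp$ spans all of $\R^n$ (indeed ${\cal C}^+\cap{\cal C}^-=L$ is the line of constant vectors, so its polar is the hyperplane $\{z:\sum z_i=0\}$, and dually $({\cal C}^+)^\perp \cap ({\cal C}^-)^\perp$ is contained in $L^\perp\cap(-L^\perp)$... ), I need to identify $({\cal C}^+)^\perp\cap({\cal C}^-)^\perp$. A vector lies in the polar of ${\cal C}^+$ and in the polar of ${\cal C}^-=-{\cal C}^+$ simultaneously iff it is orthogonal to ${\cal C}^+-{\cal C}^+=\R^n$, hence equals $0$. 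Therefore the alternating projection of the residual sequence converges to $0$, i.e. $r^{(k)}=y-\hat{y}^{(k)}\to 0$, which is exactly the claim $\hat{y}^{(k)}\to y$. I would double-check the direction of the duality (projection onto $C$ vs onto $C^\perp$ via the Moreau decomposition $x=P_C x + P_{C^\perp} x$) so that the bookkeeping of which cone the residuals land in is correct.

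There is a subtlety to handle carefully: the standard statement of Von Neumann's theorem for which I want convergence of the iterates themselves (not just of the function values) is usually given for \emph{subspaces}, while for general closed convex cones one has convergence of the alternating sequence to a point of the intersection of the appropriate sets only under additional structure (e.g. when the sum of the polars is closed, which holds here since we are in finite dimension and dealing with polyhedral cones — all polyhedral cones and their sums are closed). So the main obstacle is not a deep one but a matter of citing the correct version: I would rely on the fact that ${\cal C}^+$ and ${\cal C}^-$ are polyhedral, hence the Dykstra/Von Neumann alternating projection scheme between them converges, and for polyhedral cones the limit is precisely $P_{({\cal C}^+)^\perp\cap({\cal C}^-)^\perp}$ applied to the residual, which is $0$. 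Alternatively, and perhaps more self-containedly, one can argue directly: the sequence $\|r^{(k)}\|$ is nonincreasing (each projection step cannot increase the distance to the common point $0\in{\cal C}^+\cap{\cal C}^-$, since $0$ belongs to both cones and projections onto convex sets are nonexpansive toward any point of the set), hence $\|r^{(k)}\|$ converges; then a Fejér-monotonicity / angle argument shows the decrements force $r^{(k)}$ into both polar cones in the limit, hence to $0$.

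Finally, I would note that the identifiability relations (\ref{eq:identifiabilite}) — in particular $\bar{\hat u}^{(k)}=\bar y$ and $\bar{\hat b}^{(k)}=0$, so that the mean of $\hat y^{(k)}$ is always $\bar y$ — are consistent with, and in fact sharpen, the conclusion: the convergence takes place within the affine subspace of vectors with mean $\bar y$, and on its orthogonal complement the component of $r^{(k)}$ is already $0$ from the first step. This remark is not needed for the proof but is worth including to reconcile the statement with the structure established earlier. In summary: reduce to alternating projections on the polar cones, use nonexpansiveness toward $0\in{\cal C}^+\cap{\cal C}^-$ to get monotonicity of the residual norm, and conclude that the residual must converge to $({\cal C}^+)^\perp\cap({\cal C}^-)^\perp=\{0\}$.
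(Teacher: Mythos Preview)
Your overall strategy matches the paper's: recast the recursion as an alternating-projection scheme and cite a convergence theorem. The gap is in the specific reduction you propose.

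You claim the residuals $r^{(k)}=y-\hat y^{(k)}$ are plain (Von Neumann) alternating projections onto $({\cal C}^+)^\perp$ and $({\cal C}^-)^\perp$. They are not. By Moreau, after the anti step one has $r^{(k)}=P_{({\cal C}^-)^\perp}(y-\hat u^{(k)})$, but the vector fed next into $P_{({\cal C}^+)^\perp}$ is $y-\hat b^{(k)}=r^{(k)}+\hat u^{(k)}$, not $r^{(k)}$: the previously subtracted $\hat u^{(k)}$ is reinjected. This is precisely the correction step of \emph{Dykstra's} algorithm, not Von Neumann's. (The paper acknowledges this Dykstra-on-polar-cones interpretation in the remark following its proof, citing \cite{mammen2007additive}, but deliberately avoids it as the harder route.) For the same reason your self-contained Fej\'er argument does not go through as stated: the projections $P^+,P^-$ act on the partial residuals $y-\hat b^{(k-1)}$ and $y-\hat u^{(k)}$, not on $r^{(k)}$ itself, so nonexpansiveness toward $0\in{\cal C}^+\cap{\cal C}^-$ does not directly control $\|r^{(k)}\|$. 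Your alternative phrasing via ``affine translates $y+{\cal C}^+$ and $y+{\cal C}^-$'' fails too: for instance $P_{y+{\cal C}^-}(y-\hat b^{(k-1)})=y+P_{{\cal C}^-}(-\hat b^{(k-1)})=y$, not $y-\hat u^{(k)}$.

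The paper's reduction is cleaner and is a genuine Von Neumann scheme: the two sets are ${\cal C}^+$ and its \emph{translate} $y+{\cal C}^+$ (the same cone, not the opposite one). One checks directly that $\hat u^{(k)}=P_{{\cal C}^+}(y-\hat b^{(k-1)})$ and
\[
y-\hat b^{(k)}=y-P_{{\cal C}^-}(y-\hat u^{(k)})=y+P_{{\cal C}^+}(\hat u^{(k)}-y)=P_{y+{\cal C}^+}(\hat u^{(k)}),
\]
so the pair $(\hat u^{(k)},\,y-\hat b^{(k)})$ is an honest alternating-projection sequence between ${\cal C}^+$ and $y+{\cal C}^+$. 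Then Theorem~4.8 of \cite{bauschke1994dykstra} gives $(y-\hat b^{(k)})-\hat u^{(k)}=y-\hat y^{(k)}\to 0$. The algebraic trick you missed is the identity $P_{{\cal C}^-}(z)=-P_{{\cal C}^+}(-z)$, which converts the antitonic step into a projection onto a translate of ${\cal C}^+$ rather than onto a different cone.
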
  

\noindent\begin{proof}
In the following, $y$ is held fixed and, as usual, $y+{\cal C}^+$ stands for the translated cone 
$$y+{\cal C}^+=\{y+u,u\in{\cal C}^+\}.$$
All the notations are recalled on figure \ref{fig:von-neumann1}. To understand the geometrical forces driving Proposition \ref{theo:iir-interpolation1}, this figure also provides a very simple interpretation of the algorithm, as it illustrates that the sequences of vectors $\hat{u}^{(k)}$ and $y-\hat{b}^{(k)}$ might be seen as alternate projections on the cones ${\cal C}^+$ and $y+{\cal C}^+$. In what follows, we justify this illuminating geometric interpretation in a rigorous way, and we explain its key role in the proof of the convergence as $k$ goes to infinity. 

\medskip

First, by definition, we have $\hat{u}^{(1)}=P_{{\cal C}^+}(y)$, and a moment's thought shows that 
$$P_{y+{\cal C}^+}(\hat{u}^{(1)})=y+P_{{\cal C}^+}(\hat{u}^{(1)}-y)=y-P_{{\cal C}^-}(y-\hat{u}^{(1)}).$$
Then, coming back to the very definition of  $\hat{b}^{(1)}=P_{{\cal C}^-}(y-\hat{u}^{(1)})$ leads to
$$y-\hat{b}^{(1)}=P_{y+{\cal C}^+}(\hat{u}^{(1)}).$$
In the same manner, since $\hat{u}^{(2)}=P_{{\cal C}^+}(y-\hat{b}^{(1)})$, we get 
$$y-\hat{b}^{(2)}=y-P_{{\cal C}^-}(y-\hat{u}^{(2)})=y+P_{{\cal C}^+}(y-\hat{u}^{(2)})=P_{y+{\cal C}^+}(\hat{u}^{(2)})$$
More generally, denoting $\hat{b}^{(0)}=0$, this yields for all $k\geq 1$
\begin{equation*}
\hat{u}^{(k)}=P_{{\cal C}^+}(y-\hat{b}^{(k-1)}) \qquad \textrm{and} \qquad \hat{b}^{(k)}=P_{y+{\cal C}^+}(\hat{u}^{(k)}).
\end{equation*}

\begin{figure}[H]
\begin{center}
\input{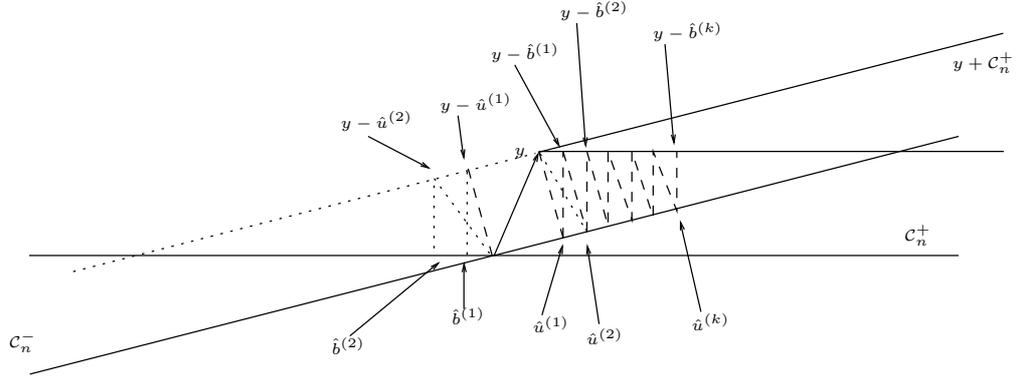}
\end{center}
\caption{Interpretation of the \textsf{I.I.R.} method as a Von Neumann's algorithm.}
\label{fig:von-neumann1}
\end{figure}

It remains to invoke Theorem 4.8 in \cite{bauschke1994dykstra} to conclude that  
$$(y-\hat{b}^{(k)})-\hat{u}^{(k)}=y-\hat{y}^{(k)}\xrightarrow[k\to\infty]{} 0,$$
which ends the proof of Proposition \ref{theo:iir-interpolation1}.
\end{proof}

A few remarks are in order:

\begin{enumerate}
\item The take-home message here is that iterating the algorithm leads to overfitting, which is of course not desired in practice. Consequently, a stopping criterion must be applied in order to avoid this phenomenon. In this aim, we have tested several pratical rules, and this will be the object of Section \ref{sec:simulation}.
\item Note that Proposition \ref{theo:iir-interpolation1} ensures the convergence of the sum $\hat{u}^{(k)}+\hat{b}^{(k)}$ but does not say anything about the convergence of its individual terms $\hat{u}^{(k)}$ and $\hat{b}^{(k)}$. However, Corollary 4.9 in \cite{bauschke1994dykstra} asserts that the sequences $\hat{u}^{(k)}$ and $\hat{b}^{(k)}$ are also convergent. The specification of these limits will be possible in our situation thanks to the introduction of another equivalent algorithm, called \textsf{I.I.B.R}. This will be the topic of the next subsection. 
\item In \cite{mammen2007additive}, Mammen and Yu consider isotonic functions in the multivariate additive model (\ref{eq:additive-models}). They rely on the analysis of Dykstra's algorithm sequences \cite{dykstra1983algorithm} to show that for fixed $n$ the backfitting procedure converges to the solution of 
$$\arg\min\sum_{j=1}^n\left(y_j-u_j\right)^2$$
where $u$ is the sum of $d$ vectors in ${\cal C}^+$. Hence, our case is rather comparable to the one considered by these authors when $d=2$. Correspondingly, in our setting, one can see the vectors $y-\hat{b}^{(k-1)}-\hat{u}^{(k)}$ and $y-\hat{u}^{(k)}-\hat{b}^{(k)}$ as Dykstra's sequences and consider Mammen and Yu's alternate projections on the polar cones of ${\cal C}^+$ and ${\cal C}^-$ to prove Proposition \ref{theo:iir-interpolation1}. However, as we are alternating projections onto opposite cones, considering  $\hat{r}^{(k)}$ and $y-\hat{b}^{(k)}$ yields the easier Von Neumann's type interpretation given in our proof.
\end{enumerate}    

\subsection{The connection with Iterative Isotonic Bias Reduction}
In this section we propose another algorithm inspired by bias reduction techniques in regression \cite{buhlmann2003boosting}\cite{cornillon2011}. In a nutshell, the idea here is to work on the updated residuals $y-\hat{y}^{(k)}$ in order to refine the estimator at each step. Specifically, this algorithm takes the following form. 
\begin{algorithm}[H]                      
\caption{Iterative Isotonic Bias Reduction  (\textsf{I.I.B.R.})}          
\label{algo:bir}                           
\begin{algorithmic}                   %
\STATE (1) Initialization:
\begin{align*}
\hat{y}^{(0)}&=0
\end{align*} 
\STATE (2) Cycle: for $k\geq 1$
\begin{equation*}
\begin{array}{ll}
\tilde{u}^{(k)}&=\iso\left(y-\hat{y}^{(k-1)}\right)\\
\tilde{b}^{(k)}&=\anti\left(y-\hat{y}^{(k-1)}-\hat{u}^{(k)}\right)\\
\end{array}
\end{equation*}
Updating:
\begin{equation*}
\hat{y}^{(k)}=\hat{y}^{(k-1)}+\tilde{u}^{(k)}+\tilde{b}^{(k)}
\end{equation*}
\STATE (3) Iterate (2) until a stopping condition to be specified.   
\end{algorithmic}
\end{algorithm}

Interestingly, it turns out that algorithms \textsf{I.I.R.} and \textsf{I.I.B.R.} coincide. This remarkable fact, which is the purpose of the next theorem, deeply relies on the property of isotonicity of the projection onto the cones we consider. As a by-product, it will allow us to make precise the individual limits of the sequences $\hat{u}^{(k)}$ and $\hat{b}^{(k)}$.

\begin{theorem}\label{theo:egalite-algos}
With the previous notations, for all $k\geq 1$,
$$\hat{u}^{(k)}=\sum_{j=1}^k \tilde{u}^{(j)} \qquad \textrm{and} \qquad \hat{b}^{(k)}=\sum_{j=1}^k \tilde{b}^{(j)}.$$
\end{theorem}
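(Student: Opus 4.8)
The plan is to prove both identities simultaneously by induction on $k$. For $k=1$ the claim is immediate: since $\hat b^{(0)}=\hat y^{(0)}=0$, we have $\hat u^{(1)}=\iso(y)=\tilde u^{(1)}$, and then $\hat b^{(1)}=\anti(y-\hat u^{(1)})=\anti(y-\hat y^{(0)}-\tilde u^{(1)})=\tilde b^{(1)}$. For the inductive step, assume $\hat u^{(k)}=\sum_{j=1}^k\tilde u^{(j)}$ and $\hat b^{(k)}=\sum_{j=1}^k\tilde b^{(j)}$, so that in particular $\hat y^{(k)}=\hat u^{(k)}+\hat b^{(k)}$ equals the \textsf{I.I.B.R.} iterate $\hat y^{(k)}=\sum_{j=1}^k(\tilde u^{(j)}+\tilde b^{(j)})$. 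I must then show $\hat u^{(k+1)}=\hat u^{(k)}+\tilde u^{(k+1)}$ and $\hat b^{(k+1)}=\hat b^{(k)}+\tilde b^{(k+1)}$.

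The heart of the matter is the first of these two identities; the second will follow from it by the same kind of bookkeeping used in the base case, since $\hat b^{(k+1)}=\anti(y-\hat u^{(k+1)})$ and, once $\hat u^{(k+1)}=\hat u^{(k)}+\tilde u^{(k+1)}$ is known, one checks $y-\hat u^{(k+1)}=(y-\hat y^{(k)}-\tilde u^{(k+1)})+\hat b^{(k)}$, while $\hat b^{(k)}\in\mc C^-$ is constant on the level sets needed so that $\anti$ of the sum splits as $\hat b^{(k)}+\anti(y-\hat y^{(k)}-\tilde u^{(k+1)})=\hat b^{(k)}+\tilde b^{(k+1)}$. To establish $\hat u^{(k+1)}=\hat u^{(k)}+\tilde u^{(k+1)}$, recall that by definition $\hat u^{(k+1)}=\iso(y-\hat b^{(k)})$ and $\tilde u^{(k+1)}=\iso(y-\hat y^{(k)})=\iso\big((y-\hat b^{(k)})-\hat u^{(k)}\big)$. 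So I want to show
$$
\iso(y-\hat b^{(k)}) = \hat u^{(k)} + \iso\big((y-\hat b^{(k)})-\hat u^{(k)}\big),
$$
i.e. that adding the vector $\hat u^{(k)}\in\mc C^+$ inside $\iso$ and pulling it out is legitimate. The natural tool is Corollary \ref{coriso}: since $\hat u^{(k)}\in\mc C^+$, the difference $(y-\hat b^{(k)})-\big((y-\hat b^{(k)})-\hat u^{(k)}\big)=\hat u^{(k)}$ lies in $\mc C^+$, so by $\mc C^+$-isotonicity of the projection, $\iso(y-\hat b^{(k)})-\iso\big((y-\hat b^{(k)})-\hat u^{(k)}\big)\in\mc C^+$. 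But isotonicity alone only gives membership in the cone, not equality to $\hat u^{(k)}$; to upgrade this I would use the sharper fact — which should be available either directly from \cite{NemethNemeth2009} or from the structure of $\iso$ via PAVA — that when the added vector $\hat u^{(k)}$ is constant on every level set of $\iso\big((y-\hat b^{(k)})-\hat u^{(k)}\big)=\tilde u^{(k+1)}$, the projection is translation-equivariant: $\iso(z+v)=v+\iso(z)$ whenever $v\in\mc C^+$ is constant on the blocks of $\iso(z)$. The singular-variation relations \eqref{eq:identifiabilite}, namely $\Delta(\hat u^{(k)})\circ\Delta(\hat b^{(k)})=0$ and their consequences, are exactly what guarantee this block-compatibility condition, so they must be fed into the argument.

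\textbf{Main obstacle.} The delicate point is precisely this passage from ``the difference lies in $\mc C^+$'' to ``the difference equals $\hat u^{(k)}$.'' I expect to handle it by combining the isotonicity inclusion with the variational (KKT-type) characterization of the metric projection onto $\mc C^+$: writing $w=\iso(y-\hat b^{(k)})$ and $w'=\hat u^{(k)}+\tilde u^{(k+1)}$, one shows $w'\in\mc C^+$, that $(y-\hat b^{(k)})-w'$ is orthogonal to $\mc C^+$ at $w'$ in the required sense (using that $y-\hat y^{(k)}-\tilde u^{(k+1)}\perp\tilde u^{(k+1)}$ on each level set and that $\hat u^{(k)}$ is constant there), and hence $w'$ satisfies the characterizing conditions of the projection, forcing $w'=w$ by uniqueness. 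Verifying the orthogonality conditions level-set by level-set, and confirming that \eqref{eq:identifiabilite} indeed makes the level sets of $\hat u^{(k)}$, $\hat b^{(k)}$ and $\tilde u^{(k+1)}$ interact correctly, is the one place where genuine care is needed; everything else is induction bookkeeping.
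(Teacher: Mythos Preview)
Your approach has a genuine gap, and the paper's argument is both simpler and avoids it. You plan to feed the singular-variation part of \eqref{eq:identifiabilite}, namely $\Delta(\hat u^{(k)})\circ\Delta(\hat b^{(k)})=0$, into a block-compatibility / translation-equivariance step. But in the paper's logical order (see the remark following Corollary~\ref{cor:identifiabilite}), that singular-variation identity is itself \emph{proved using} Theorem~\ref{theo:egalite-algos}; invoking it here is circular unless you supply an independent proof, which you have not. Even granting it, $\Delta(\hat u^{(k)})\circ\Delta(\hat b^{(k)})=0$ relates the level sets of $\hat u^{(k)}$ to those of $\hat b^{(k)}$, not to those of $\tilde u^{(k+1)}$; your specific claim that ``$\hat u^{(k)}$ is constant on the blocks of $\tilde u^{(k+1)}$'' does not follow from it (what Proposition~\ref{pro:intermediaire-identifiabilite} gives, via $\hat b^{(k)}=\anti(y-\hat u^{(k)})$ and $\tilde u^{(k+1)}=\iso(y-\hat u^{(k)}-\hat b^{(k)})$, is $\Delta(\tilde u^{(k+1)})\circ\Delta(\hat b^{(k)})=0$, which is a different and weaker statement). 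Your KKT alternative boils down to checking $\langle y-\hat y^{(k)}-\tilde u^{(k+1)},\,\hat u^{(k)}\rangle=0$, another nontrivial compatibility you have not justified. The same problem recurs in your treatment of $\hat b^{(k+1)}$.

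What the paper does instead is much more direct. It first uses Corollary~\ref{coriso} in a one-line induction to show $\hat u^{(k+1)}-\hat u^{(k)}\in\mc C^+$ for every $k$ (and symmetrically $\hat b^{(k+1)}-\hat b^{(k)}\in\mc C^-$). This is the \emph{only} use of isotonicity, and it produces exactly the right object: $\hat u^{(k+1)}-\hat u^{(k)}$ becomes a legitimate competitor for the projection $\tilde u^{(k+1)}=\iso(y-\hat y^{(k)})$. The induction step is then a two-sided distance comparison. Since $\hat u^{(k)}+\tilde u^{(k+1)}\in\mc C^+$, optimality of $\hat u^{(k+1)}=\iso(y-\hat b^{(k)})$ gives
\[
\|y-\hat b^{(k)}-\hat u^{(k+1)}\|\le\|y-\hat b^{(k)}-(\hat u^{(k)}+\tilde u^{(k+1)})\|=\|y-\hat y^{(k)}-\tilde u^{(k+1)}\|;
\]
since $\hat u^{(k+1)}-\hat u^{(k)}\in\mc C^+$, optimality of $\tilde u^{(k+1)}$ gives the reverse inequality. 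Hence both $\tilde u^{(k+1)}$ and $\hat u^{(k+1)}-\hat u^{(k)}$ realize the minimal distance from $y-\hat y^{(k)}$ to $\mc C^+$, and uniqueness of the metric projection forces $\tilde u^{(k+1)}=\hat u^{(k+1)}-\hat u^{(k)}$. No level-set analysis, no identifiability conditions, no KKT verification is needed.
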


Proposition \ref{theo:iir-interpolation1} ensures that $\hat{y}^{(k)}$ tends to $y$ when $k$ goes to infinity. Thanks to Theorem \ref{theo:egalite-algos} we will go one step further and show in Corollary \ref{theo:iir-interpolation2} that the individual terms $\hat{u}^{(k)}$ and $\hat{b}^{(k)}$ tend to some explicit limits $u_{y}^{\star}$ and $b_{y}^{\star}$. These limits are simply the discrete analogous of the Jordan minimum variance decomposition of a function with bounded variation (\ref{eq:jordanmin}) for the vector $y$. As will be stated below, they are indeed characterized by $y=u_y^{\star}+b_y^{\star}$ and 
$$\Delta(u_y^{\star})\circ\Delta(b_y^{\star})=0,$$
the empirical means of $y$ and $u_y^{\star}$ being the same.

\medskip

Before proceeding, let us illustrate this on the example of figure \ref{fig:convergence-individuel}. The left-hand side represents the vector $y$ and the decomposition $y=u_y^{\star}+b_y^{\star}$. One can see that $$y_{i+1}-y_i>0 \Rightarrow u_{y,i+1}^{\star}-u_{y,i}^{\star}=y_{i+1}-y_i \textrm{ and } b_{y,i+1}^{\star}-b_{y,i}^{\star}=0$$
and conversely, 
$$y_{i+1}-y_i<0 \Rightarrow u_{y,i+1}^{\star}-u_{y,i}^{\star}=0 \textrm{ and }b_{y,i+1}^{\star}-b_{y,i}^{\star}=y_{i+1}-y_i,$$
which, in the discrete case, amounts to say that $u^{\star}$ and $b^{\star}$ have singular associated Stieltjes measures in equation  (\ref{eq:jordanmin}).
The right-hand side of the figure illustrates that when the number of iterations $k$ goes to infinity, $\hat{u}^{(k)}$ and $\hat{b}^{(k)}$ converge respectively to $u_{y}^{\star}$ and $b_{y}^{\star}$.

\begin{figure}[H]
\begin{center}
\input{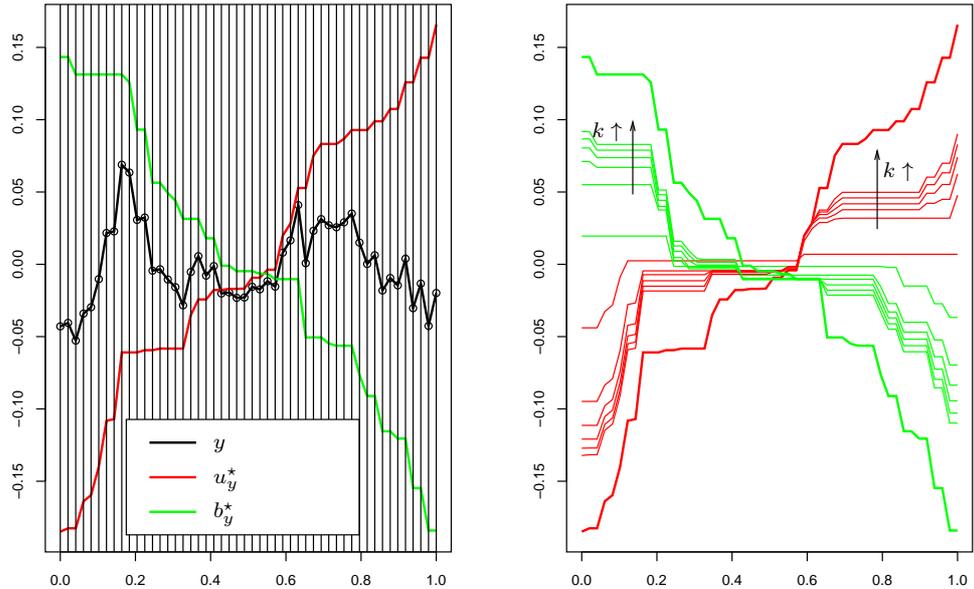}
\end{center}
\caption{Respective convergences of $\hat{u}^{(k)}$ and $\hat{b}^{(k)}$ to $u_{y}^{\star}$ and $b_{y}^{\star}$.}
\label{fig:convergence-individuel}
\end{figure}

\begin{corollary}\label{theo:iir-interpolation2}
The sequences $\left(\hat{u}^{(k)}\right)_{k\geq 1}$ and $\left(\hat{b}^{(k)}\right)_{k\geq 1}$ admit the following limits
$$\lim_{k\rightarrow \infty}\hat{u}^{(k)}=u_y^{\star} \qquad \textrm{ and }\qquad \lim_{k\rightarrow \infty}\hat{b}^{(k)}=b_y^{\star}$$
where $u_y^{\star}$ and $b_y^{\star}$ are such that $y=u_y^{\star}+b_y^{\star}$, $\bar{u_y^{\star}}=\bar{y}$, $\bar{b_y^{\star}}=0$ and $\Delta(y)=\Delta(u_y^{\star})\circ\Delta(b_y^{\star})$. 
\end{corollary}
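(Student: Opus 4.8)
The plan is to pass to the limit in the identifiability relations (\ref{eq:identifiabilite}), exploiting the fact --- central here --- that they hold at \emph{every} iteration $k\geq 1$, not merely asymptotically. Combined with $\hat{u}^{(k)}=\iso(y-\hat{b}^{(k-1)})\in{\cal C}^+$ and $\hat{b}^{(k)}=\anti(y-\hat{u}^{(k)})\in{\cal C}^-$, these relations pin down $\Delta(\hat{u}^{(k)})$ and $\Delta(\hat{b}^{(k)})$ coordinatewise as functions of $\Delta(\hat{y}^{(k)})$ alone. Indeed, I would first record the elementary fact that whenever $a\geq 0$, $c\leq 0$ and $ac=0$, one has $a=\max(a+c,0)$ and $c=\min(a+c,0)$; applying it with $a=\Delta(\hat{u}^{(k)})_i$ and $c=\Delta(\hat{b}^{(k)})_i$ --- whose product vanishes by (\ref{eq:identifiabilite}) and whose sum equals $\Delta(\hat{y}^{(k)})_i$ --- gives, for all $k\geq 1$ and all $i$,
\[
\Delta(\hat{u}^{(k)})_i=\max\bigl(\Delta(\hat{y}^{(k)})_i,0\bigr)\qquad\text{and}\qquad\Delta(\hat{b}^{(k)})_i=\min\bigl(\Delta(\hat{y}^{(k)})_i,0\bigr).
\]

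Next I would let $k\to\infty$. By Proposition \ref{theo:iir-interpolation1}, $\hat{y}^{(k)}\to y$, hence $\Delta(\hat{y}^{(k)})\to\Delta(y)$, and continuity of $t\mapsto\max(t,0)$ gives $\Delta(\hat{u}^{(k)})\to v$ with $v_i=\max(\Delta(y)_i,0)$. Since the linear map sending a vector of $\R^n$ to the pair consisting of its empirical mean and its vector of consecutive differences is a bijection onto $\R\times\R^{n-1}$, hence a homeomorphism, and since (\ref{eq:identifiabilite}) freezes $\bar{\hat{u}}^{(k)}=\bar{y}$, the vectors $\hat{u}^{(k)}$ themselves converge to the unique $u_y^{\star}$ determined by $\Delta(u_y^{\star})=v$ and $\bar{u_y^{\star}}=\bar{y}$. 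Setting $b_y^{\star}:=y-u_y^{\star}$, Proposition \ref{theo:iir-interpolation1} then yields $\hat{b}^{(k)}=\hat{y}^{(k)}-\hat{u}^{(k)}\to b_y^{\star}$.

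It remains to check that $(u_y^{\star},b_y^{\star})$ has the announced properties. By construction $y=u_y^{\star}+b_y^{\star}$ and $\bar{u_y^{\star}}=\bar{y}$, whence $\bar{b_y^{\star}}=0$; since $v\geq 0$ we have $u_y^{\star}\in{\cal C}^+$, and $\Delta(b_y^{\star})_i=\Delta(y)_i-\max(\Delta(y)_i,0)=\min(\Delta(y)_i,0)\leq 0$ gives $b_y^{\star}\in{\cal C}^-$; finally $\Delta(u_y^{\star})_i\,\Delta(b_y^{\star})_i=\max(\Delta(y)_i,0)\,\min(\Delta(y)_i,0)=0$ for every $i$, i.e. the singular-variation relation $\Delta(u_y^{\star})\circ\Delta(b_y^{\star})=0$, which also reproduces precisely the coordinatewise behaviour illustrated in Figure \ref{fig:convergence-individuel}. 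Hence the limits coincide with the discrete analogue of the Jordan minimum variation decomposition of $y$.

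The step I expect to be the crux is the \emph{transfer of convergence}: the relation $\Delta(\hat{u}^{(k)})\circ\Delta(\hat{b}^{(k)})=0$ is nonlinear and, taken at face value, only passes to the limit once both sequences are already known to converge. The whole point is that (\ref{eq:identifiabilite}) lets one \emph{solve} this relation at each fixed $k$, expressing $\hat{u}^{(k)}$ as a continuous function of $\hat{y}^{(k)}$ alone, so that convergence of $\hat{y}^{(k)}$ from Proposition \ref{theo:iir-interpolation1} is inherited for free --- and, crucially, pins the limit down explicitly. (Convergence of $(\hat{u}^{(k)})$ and $(\hat{b}^{(k)})$ per se is also guaranteed by Corollary~4.9 of \cite{bauschke1994dykstra}, mentioned in the second remark following Proposition \ref{theo:iir-interpolation1}, but that does not identify the limits, which is the actual content here.)
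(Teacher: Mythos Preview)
Your proof is correct, and it takes a genuinely different and somewhat more direct route than the paper's.

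The paper also starts from the identifiability relations (\ref{eq:identifiabilite}), but instead of solving them pointwise, it introduces the total-variation functional $\nv(z)=\sum_{i=1}^{n-1}|z_{i+1}-z_i|$ and uses the equivalence, valid for $u\in{\cal C}^+$ and $b\in{\cal C}^-$, between $\Delta(u)\circ\Delta(b)=0$ and $\nv(u+b)=\nv(u)+\nv(b)$. It then \emph{assumes} the existence of limits $u^{\infty}$, $b^{\infty}$ (invoking Corollary~4.9 of \cite{bauschke1994dykstra}), passes to the limit in this additive relation by continuity of $\nv$, and recovers $\Delta(u^{\infty})\circ\Delta(b^{\infty})=0$ from the equivalence, thereby identifying $u^{\infty}=u_y^{\star}$.

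Your approach dispenses with the external convergence result altogether: by observing that for each $k$ the identifiability conditions force $\Delta(\hat{u}^{(k)})_i=\max(\Delta(\hat{y}^{(k)})_i,0)$, you exhibit $\hat{u}^{(k)}$ as a \emph{continuous function} of $\hat{y}^{(k)}$ (via the mean--difference bijection), so that convergence of $\hat{u}^{(k)}$ follows immediately from Proposition~\ref{theo:iir-interpolation1} alone. This is both more elementary and more explicit: it yields the formula $\Delta(u_y^{\star})_i=\max(\Delta(y)_i,0)$ directly, whereas the paper's argument only characterizes $u_y^{\star}$ implicitly through the singular-variation condition. The paper's route, on the other hand, makes the link with the continuous Jordan decomposition more transparent through the variation norm.
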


\begin{proof}
In order to lighten the notations a bit, let us assume that the empirical mean of $y$ is equal to zero. Consequently, all the intermediates of both algorithms have also zero mean, as well as $u_{y}^{\star}$ and $b_{y}^{\star}$. This allows us to work in the hyperplane ${\cal E}$, that means the subspace of $\R^n$ consisting in all zero mean vectors. In this hyperplane, we introduce the norm $\nv$ to quantify the variations of a vector:
$$
 \begin{array}{llll}
\nv: & {\cal E} & \rightarrow & \R^+\\
   & y & \mapsto & \nv(y)=\sum_{i=1}^{n-1} |y_{i+1}-y_i|
\end{array}
$$

Then, it is routine to check that for any vector $z\in{\cal E}$ and any decomposition $z=u_z+b_z$ with $u_z$ and $b_z$ both in ${\cal E}$, we have the following equivalence 
\begin{equation}\label{eq:equivalence}
\Delta(u_z)\circ\Delta(b_z)=0\ \Leftrightarrow\ \nv(z)=\nv(u_z)+\nv(b_z).
\end{equation}

Now, Corollary \ref{cor:identifiabilite} in Section \ref{sec:identifiability} ensures that for all $k\geq 1$, $\hat{u}^{(k)}$ and $\hat{b}^{(k)}$ have singular variations (one can also see that on figure \ref{fig:convergence-individuel}) which implies that for all $k\geq 1$ 
\begin{equation*}
 \nv(\hat{y}^{(k)})=\nv(\hat{u}^{(k)})+\nv(\hat{b}^{(k)}).
\end{equation*}

Accordingly, we just have to justify that the limits $u^{\infty}:=\lim_{k\rightarrow \infty}\hat{u}^{(k)}$ and $b^{\infty}:=\lim_{k\rightarrow \infty}\hat{b}^{(k)}$ satisfy $\nv(u^{\infty})+\nv(b^{\infty})=\nv(y)$ to deduce that $\Delta(u^{\infty})\circ\Delta(b^{\infty})=0$, hence $u^{\infty}=u_y^{\star}$ and $b^{\infty}=b_y^{\star}$.
The existence of the limits implies that for all $i\in\{1,\dots,n\}$, 
$$ u^{\infty}_i:=\lim_{k\rightarrow \infty}\hat{u}^{(k)}_i \qquad \textrm{and}\qquad  b^{\infty}_i:=\lim_{k\rightarrow \infty}\hat{b}^{(k)}_i$$
are well-defined. Then, the continuity of the norm $\nv$ leads to
\begin{equation*}
\lim_{k\rightarrow \infty}\nv(\hat{u}^{(k)})=\nv(u^{\infty}) \qquad \textrm{and} \qquad \lim_{k\rightarrow \infty}\nv(\hat{b}^{(k)})=\nv(b^{\infty}),
\end{equation*}
and the same relation holds for $\hat{y}^{(k)}$ and $y$. Thus, 
$$
\begin{array}{lll}
\nv(y)&=\nv \left(\lim_{k\rightarrow \infty}\{\hat{u}^{(k)}+\hat{b}^{(k)}\}\right)&\\
    &=\lim_{k\rightarrow \infty}\nv \left(\hat{u}^{(k)}+\hat{b}^{(k)}\right)&\\
    &=\lim_{k\rightarrow \infty}\left\{\nv(\hat{u}^{(k)})+\nv(\hat{b}^{(k)})\right\}&\\
    &=\lim_{k\rightarrow \infty}\nv (\hat{u}^{(k)})+\lim_{k\rightarrow \infty}\nv(\hat{b}^{(k)})&\\
    &=\nv\left(\lim_{k\rightarrow \infty} \hat{u}^{(k)}\right)+\nv\left(\lim_{k\rightarrow \infty} \hat{b}^{(k)}\right)&\\
    &=\nv(u^{\infty})+\nv(b^{\infty}).&
\end{array}
$$
From equivalence (\ref{eq:equivalence}), we deduce that $u^{\infty}=u_y^{\star}$ and $b_y^{\star}=b^{\infty}$, where $y=u_y^{\star}+b_y^{\star}$ is the Jordan minimum variance decomposition of $y$.  
 
\end{proof}

\section{Discussion}\label{sec:simulation}
As increasing the number of iterations  leads to overfit the data, iterating the procedure until convergence is not desirable. This brings up to the choice of a stopping rule which could be used in practice. Viewing the latter question as a model selection issue suggests stopping criteria based for example on Akaike Information Criterion (AIC, see \cite{akaike1973}), modified AIC criterion (AICc, see \cite{hurvich1998smoothing}), Bayesian Information Criterion (BIC, see \cite{schwarz1978estimating}) and Generalized Cross Validation (GCV, see \cite{craven1978smoothing}). 

\medskip

For a linear smoother $\hat{r}_{\lambda}=S_{\lambda}(y)$, with $\lambda$ the smoothing parameter and $S_{\lambda}$ the smoothing matrix, these selectors can be written in the common form   
\begin{equation}\label{eq:stop-commonform}
\arg\min_{\lambda}\left\{\log \frac{1}{n}\RSS_{\lambda}+\phi(p_{\lambda})\right\}
\end{equation}
where $\RSS$ denotes the residual sum of squares and $\phi(.)$ is an increasing function of the number of degrees of freedom  $p_{\lambda}$ of the smoother. One usually takes $p_{\lambda}=\trace(S_{\lambda})$ or $p_{\lambda}=\trace(S_{\lambda}S_{\lambda}^{t})$ (see \cite{buja1989linear}, section 2.7.3) and according to the various criteria mentioned above 
\begin{equation}\label{eq:phi-criteres}  
\begin{array}{ll}
\phi_{AIC}(p_{\lambda})&=2\frac{p_{\lambda}}{n}\\
\phi_{BIC}(p_{\lambda})&=\frac{p_{\lambda}}{n}\log n\\ 
\phi_{AICc}(p_{\lambda})&=1+2\frac{p_{\lambda}+1}{n-p_{\lambda}-2}\\
\phi_{GCV}(p_{\lambda})&=-2\log\left(1-\frac{p_{\lambda}}{n}\right).
\end{array}
\end{equation}

Thus, equation (\ref{eq:stop-commonform}) leads to a tradeoff between the goodness of fit and a penalization of high model complexity.   Although isotonic regression is not a linear smoother, we refer to Meyer and  Woodroofe \cite{meyer2000} to consider that the number of distinct values among the fitted vector provides the effective dimension of the model. Taking this into account and considering that increasing the number of iterations tends to reduce the residual sum of squares but raises the complexity of the model, a natural extension for iterative isotonic regression is to replace $p_{\lambda}$ by the number of sets of the fitted vector $\hat{r}^{(k)}$ in (\ref{eq:stop-commonform}) and solve
\begin{equation}\label{eq:criteria-iir}
\arg\min_{k\in {\cal K}}\left\{\log \frac{1}{n}\RSS_{k}+\phi(p_{k})\right\}
\end{equation}
over a grid ${\cal K}$ of iterations.

\medskip

We have applied and compared these stopping rules for iterative isotonic regression through simulated data. It appears that AICc shows the best performances among the three other criteria for most investigated cases. Then, for this specific stopping criterion, we have compared iterative isotonic regression with non parametric competitors, namely local polynomial regression (\textsf{R} package \textsf{locpol}), and smoothing spline regression (\textsf{R} package \textsf{stats}, function \textsf{smooth.spline}). For further details on this topic, we refer the interested reader to the \textsf{R} package dedicated to \textsf{I.I.R.} at the following address:
{\small
\begin{verbatim}
www.sites.univ-rennes2.fr/laboratoire-statistique/JEGOU/index.html
\end{verbatim}}

\medskip

The take-home message is that \textsf{I.I.R.} can not compete with spline or local polynomial regression for smooth functions. However, when the functions have discontinuities, our estimator compares favorably with his competitors, in particular when the sample size increases. This suggests that our method could be used to locate  discontinuities in a regression framework. Applications arise for example in genomic where the detection of breakpoints from Array Comparative Genomic Hybridization (array CGH) profiles is of crucial importance to identify genes involved in cancer progression \cite{hupe2004}.

\section{Appendix}\label{sec:proofs}
\subsection{Proof of Lemma \ref{ossz}}

The relation (\ref{ortossz}) follows directly  from
$${\cal C}={\cal C}\cap (L^\perp \oplus L).$$
It is known (see \cite{Zarantonello1971}) that the projection $P_{\cal C}x$ of $x$ onto the cone ${\cal C}$ is characterized by the couple of relations:
\begin{equation}\label{elso}
\lang x-P_{\cal C}x,y\rang \leq 0,\;\forall \; y \in {\cal C},
\end{equation}
and
\begin{equation}\label{masod}
\lang x-P_{\cal C}x,P_{\cal C}x\rang = 0.
\end{equation}
Hence, we have to verify the above relations for $P_Kx_k+x_l$ instead of $P_{\cal C}x$. By the relation (\ref{ortossz}), $P_Kx_k+x_l \in {\cal C}.$ Then, take an arbitrary $y\in {\cal C}$ represented by (\ref{ortossz}) in the form
$$y=y_k+y_l$$
with $y_k\in K$ and $y_l\in L$. Then, we have
$$ \lang x_k+x_l-(P_Kx_k+x_l),y_k+y_l\rang = \lang x_k-P_Kx_k,y_k\rang \leq 0,\;\;\forall \;y=y_k+y_l\in {\cal C},$$
because $y_l$ is perpendicular to $x_k-P_Kx_k\in L^\perp$, and because of the relation similar to (\ref{elso}) characterizing the projection of $x_k$ onto the pointed cone $K$ in $L^\perp$. Thus, relation (\ref{elso}) holds for $P_Kx_k+x_l$ in place of $P_{\cal C}x.$ We further have 
$$\lang x_k+x_l-(P_Kx_k+x_l),P_Kx_k+x_l\rang =\lang x_k-P_Kx_k,P_Kx_k\rang =0$$
because $x_l$ is perpendicular to $x_k-P_Kx_k$ and because of the relation similar to (\ref{masod}) applied to $x_k\in L^\perp$ and its projection onto $K$. The obtained relation is exactly (\ref{masod}) for $P_Kx_k+x_l$ instead of $P_{\cal C}x.$

\subsection{Proof of Theorem \ref{fo}}

Take $u,\;v\in L^\perp$. Then, $u\leq_K v$ is equivalent to $u\leq_{\cal C} v$. If $P_{\cal C}$ is ${\cal C}$-isotone, then $u\leq_{\cal C} v$ implies by Lemma \ref{ossz}
\begin{equation*}
P_Kv-P_Ku=P_{\cal C}v-P_{\cal C}u\in {\cal C}.
\end{equation*}
Since $P_Ku,\:P_Kv\in L^\perp$, it follows that
$$P_Kv-P_Ku\in L^\perp \cap {\cal C}=K.$$
The obtained relation shows that $P_K$ is $K$-isotone, concluding the proof of the necessity of the theorem.

\medskip

Suppose now that $P_K$ is $K$-isotone and take $u,\;v\in \R^n$ with $u\leq_{\cal C} v$. If $u=u_k+u_l$ and $v=v_k+v_l$ with $u_k,\;v_k\in L^\perp$, and $u_l,\;v_l\in L$, then using formula (\ref{ortossz})
$$v-u=v_k-u_k+v_l-u_l\in K\oplus L$$
and hence $v_k-u_k\in K$, that is $u_k\leq_K v_k$ and by the $K$-isotonicity of $P_K$ it follows that
$$P_Kv_k-P_Ku_k\in K.$$
Hence, using formula (\ref{projek}) we have
$$P_{\cal C}v-P_{\cal C}u=P_Kv_k+v_l-P_Ku_k-u_l= P_Kv_k-P_Ku_k +v_l-u_l\in K\oplus L={\cal C}.$$
That is $P_{\cal C}u\leq_{\cal C} P_{\cal C}v$, which concludes the isotonicity of $P_{\cal C}$.

\subsection{Proof of Corollary \ref{coriso}}\label{sec:proof-cor2}

It clearly suffices to prove that the monotone cone ${\cal C}^-=\{b\in \R^n:\;b_1\geq b_2\geq ...\geq b_n\}$ admits an isotone projection. For this, we have to introduce some notations. Let us first take the following base in $\R^n$: 
$$e_1=(1,0,...,0), e_2=(1,1,0,...,0),\dots,e_{n-1}=(1,...,1,0),e_n=(1,1,...,1).$$
Then, an arbitrary element $b=(b_1,...,b_n)\in \R^n$ can be represented in the form
\begin{equation}\label{kifejez}
b=(b_1-b_2)e_1+(b_2-b_3)e_2+...+(b_{n-1}-b_n)e_{n-1}+b_ne_n,
\end{equation}
the relation $b\in {\cal C}^-$ being equivalent with 
\begin{equation}\label{ekelem}
b_{j-1}-b_j\geq 0,\;\;j=2,...,n.
\end{equation}
Let us consider further the following base in $L^\perp$:
$$e'_1=(n-1,-1,-1,...,-1),e'_2=(n-2,n-2,-2,...,-2),\dots,e'_{n-1}=(1,...,1,-(n-1)).$$
The following notation is standard in the convex geometry and ordered vector space theory: If $M\subset \R^n$ is a non-empty set, then let
$$\cone(M)=\{t_1n_1+...+t_kn_k:\;n_i\in M,\;t_i\in \R_+=[0,+\infty),\;i=1,...,k;\;k\in \N\}.$$
The set $\cone(M)$ is the minimal cone containing the set $M$ and it is called \emph{the cone generated by $M$}. Denoting 
\begin{equation*}
L={\cal C}^-\cap(-{\cal C}^-)=\{b\in \R^n:\;b_1=b_2=...=b_n\},
\end{equation*}
and $K=L^\perp \cap {\cal C}^-$, we will see next that
\begin{equation}\label{ak}
K=\cone(\{e'_1,...,e'_{n-1}\}).
\end{equation}
Since $e'_j\in {\cal C}^-\cap L^\perp$, we have obviously that
\begin{equation}\label{benne}
\cone(\{e'_1,...,e'_{n-1}\})\subset K.
\end{equation}
Comparing the vectors $e_i$ and $e'_j$ we get
\begin{equation}\label{comparing}
\frac{1}{n-j+1}(e'_j+e_n)=e_j,\;\;j=1,...,n-1. 
\end{equation}
By substitution of $e_j,\;j=1,...,n-1$, the representation (\ref{kifejez}) of $b$ becomes
\begin{equation}\label{kifejez1}
b =(b_1-b_2)\frac{1}{n}(e'_1-e_n)+(b_2-b_3)\frac{1}{n-1}(e'_2+e_n)+...+(b_{n-1}-b_n)\frac{1}{2}(e'_{n-1}+e_n)+b_ne_n
\end{equation}
Suppose now that $b\in {\cal C}^-$, that is, relations (\ref{ekelem}) hold. Then,  the coefficients of $e'_j,\;j=1,...,n-1$ in its representation (\ref{kifejez1}) are non-negative. Thus, we have
\begin{equation}\label{kesz}
	b\in {\cal C}^- \Leftrightarrow b =\sum_{j=1}^{n-1} t_je'_j+t_ne_n,\textrm{ }t_j\in \R_+,
	\textrm{ }j=1,...,n-1,\textrm{ }t_n\in \R.
\end{equation}
In particular, if $b\in K$, then, by \eqref{subcone}, we have $b\in L^\perp$. Hence, by multiplying (\ref{kesz}) scalarly by $e^n$ and by using $\lang b,e_n\rang=0$ (which follows from $b\in L^\perp$ and $e_n\in L$) and $\lang e'_j,e_n\rang=0$ (which follows from $e'_j\in L^\perp$ and $e_n\in L$), we get $t_n=0$. This reasoning shows that
$$K \subset \cone (\{e'_1,...,e'_{n-1}\}),$$
inclusion which together with (\ref{benne}) proves (\ref{ak}).
\medskip
We consider now the vectors
$$u_1=(-1,1,0,...,0),u_2=(0,-1,1,0,...,0),\dots,u_{n-1}=(0,...,0,-1,1).$$
Then, $u_i\in L^\perp,\;i=1,...,n-1,$ and we have
\begin{equation}\label{polar}
\lang u_i,e'_j\rang=0 \;\textrm{if} \; i\not= j,\;\;\lang u_i,e'_i\rang <0,\;i,j=1,...,n-1.
\end{equation}
According to the reasonings in \cite{NemethNemeth2009} the relations (\ref{polar}) show that
$$\cone (\{u_1,...,u_{n-1}\})$$
is the polar of $K$ in the subspace $L^\perp$. Further, we have
$$\lang u_i,u_j\rang \leq 0 \;\;\textrm{if} \; i\not=j.$$
By the main result in \cite{isac1986monotonicity} this shows that $K$ is an isotone projection pointed cone in $L^\perp.$

\subsection{Proof of Theorem \ref{theo:egalite-algos}}\label{sec:egalite-algos}

First we prove that for all $k\geq 1$, 
\begin{equation}\label{eq:intermediaire}
 \hat{u}^{(k+1)}-\hat{u}^{(k)} \in{\cal C}^+ \qquad \textrm{and} \qquad\hat{b}^{(k+1)}-\hat{b}^{(k)} \in {\cal C}^-.
\end{equation}
For $k=1$, we have 
$$\hat{u}^{(2)}-\hat{u}^{(1)}=\iso(y-\hat{b}^{(1)})-\iso(y).$$
Since $-\hat{b}^{(1)}\in{\cal C}^+$ and as ${\cal C}^+$ is an isotone projection cone by Corollary \ref{coriso}, we deduce that $\hat{u}^{(2)}-\hat{u}^{(1)}$ belongs to ${\cal C}^+$. In the same manner, 
$$\hat{b}^{(2)}-\hat{b}^{(1)}=\anti(y-\hat{u}^{(2)})-\anti(y-\hat{u}^{(1)}),$$
which is equivalent to 
$$\hat{b}^{(2)}-\hat{b}^{(1)}=\anti\left(y-\hat{u}^{(1)}-(\hat{u}^{(2)}-\hat{u}^{(1)}) \right)-\anti(y-\hat{u}^{(1)}).$$
Now, as was just noticed, $-(\hat{u}^{(2)}-\hat{u}^{(1)})$ belongs to ${\cal C}^-$, which is also an isotone projection cone, so that $\hat{b}^{(2)}-\hat{b}^{(1)}$ is indeed in ${\cal C}^-$. We can iterate this reasoning. For example, at the next step: 
$$\hat{u}^{(3)}-\hat{u}^{(2)}=\iso(y-\hat{b}^{(2)})-\iso(y-\hat{b}^{(1)})=\iso\left(y-\hat{b}^{(1)}-(\hat{b}^{(2)}-\hat{b}^{(1)})\right)-\iso(y-\hat{b}^{(1)}),$$
and we may go on with the same arguments as before.

\medskip 

Next, we prove the desired result by induction on $k$. At the first step ($k=1$), both algorithms clearly coincide. Now let us assume that it is still true at step $k\geq 1$, which means
$$ \hat{y}^{(k)}=\hat{u}^{(k)}+\hat{b}^{(k)} \qquad \textrm{with}\ \hat{u}^{(k)}=\sum_{j=1}^{k}\tilde{u}^{(j)} \textrm{ and } \hat{b}^{(k)}=\sum_{j=1}^{k}\tilde{b}^{(j)}.$$
Our objective is to prove that $\hat{u}^{(k+1)}=\sum_{j=1}^{k+1}\tilde{u}^{(j)}$. For this, let us show that
\begin{equation}\label{eq:utile1}
\|y-\hat{b}^{(k)}-\hat{u}^{(k+1)}\|=\|y-\hat{y}^{(k)}-\tilde{u}^{(k+1)}\|.
\end{equation}
Due to the fact that $\hat{u}^{(k+1)}$ is the best non-decreasing approximation of $y-\hat{b}^{(k)}$, and since $\hat{u}^{(k)}+\tilde{u}^{(k+1)}$ itself is a non-decreasing sequence, we deduce that
$$ \|y-\hat{b}^{(k)}-\hat{u}^{(k+1)}\|\leq \|y-\hat{b}^{(k)}-(\hat{u}^{(k)}+\tilde{u}^{(k+1)})\|=\|y-\hat{y}^{(k)}-\tilde{u}^{(k+1)}\|.$$
Now, as $\tilde{u}^{(k+1)}$ is the best non-decreasing approximation of $y-\hat{y}^{(k)}$, it is in particular a better approximation than $\hat{u}^{(k+1)}-\hat{u}^{(k)}$, this latter being itself non-decreasing as was just established above (see (\ref{eq:intermediaire})). Thus, we are led to
$$\|y-\hat{y}^{(k)}-\tilde{u}^{(k+1)}\|\leq \|y-\hat{y}^{(k)}-(\hat{u}^{(k+1)}-\hat{u}^{(k)})\|= \|y-\hat{b}^{(k)}-\hat{u}^{(k+1)}\|.$$
Putting all the pieces together, (\ref{eq:utile1}) is proved, and we get
$$\|y-\hat{y}^{(k)}-\tilde{u}^{(k+1)}\|=\|y-\hat{b}^{(k)}-\hat{u}^{(k+1)}\|= \|y-\hat{y}^{(k)}-(\hat{u}^{(k+1)}-\hat{u}^{(k)})\|.$$
This indicates that $\tilde{u}^{(k+1)}$ et $\hat{u}^{(k+1)}-\hat{u}^{(k)}$ both realize the minimal distance to $y-\hat{y}^{(k)}$. As a consequence,
$$\tilde{u}^{(k+1)}=\hat{u}^{(k+1)}-\hat{u}^{(k)} \Leftrightarrow \hat{u}^{(k+1)}=\hat{u}^{(k)}+\tilde{u}^{(k+1)},$$
and finally
$$\hat{u}^{(k+1)}=\sum_{j=1}^{k+1}\tilde{u}^{(j)}.$$
The same arguments may be repeated to establish that $\hat{b}^{(k+1)}=\sum_{j=1}^{k+1}\tilde{b}^{(j)}$. Details are omitted.

\subsection{Proof of identifiability conditions}\label{sec:identifiability}

The purpose of this section is to prove the identifiability conditions (\ref{eq:identifiabilite}) for the \textsf{I.I.R.} algorithm, that is, for all $k\geq 1$ (see also figure \ref{fig:illustration-identifiabilite})
\begin{equation*}
\left\{\begin{array}{l}
\Delta\left(\hat{u}^{(k)}\right)\circ \Delta\left(\hat{b}^{(k)}\right)=0\\
\bar{\hat{u}}^{(k)}=\bar{y} \\
\bar{\hat{b}}^{(k)}=0,
\end{array}\right.
\end{equation*}

\begin{figure}[H]
\begin{center}
\input{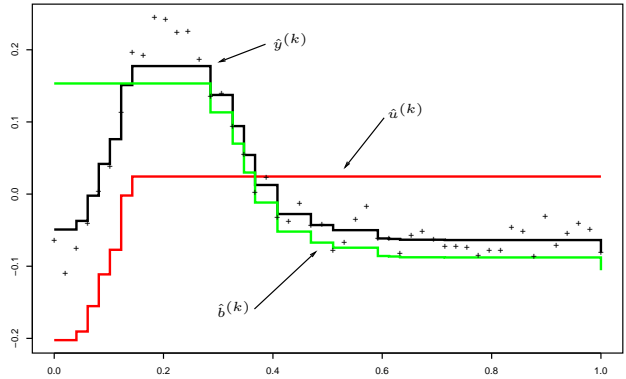}
\end{center}
\caption{Illustration of identifiability conditions.}
\label{fig:illustration-identifiabilite}
\end{figure}
  
The two last equations rely on a characterization of metric projections which was already mentioned in the proof of Lemma \ref{ossz}. From equations (\ref{elso}) and  (\ref{masod}), we know that for any $y\in \R^n$, the vector $u \in {\cal C}^+$ (resp. ${\cal C}^-$) is the isotonic (resp. antitonic) regression of $y$ if and only if 
\begin{equation*}
\langle y-u,u\rangle= 0
\end{equation*}
and 
\begin{equation}\label{eq:caracterisation1bis}
\forall v\in {\cal C}^+ (\textrm{resp. }{\cal C}^-),\ \langle y-u,v\rangle\leq 0.
\end{equation}

Taking successively $v=(1/n,\ldots,1/n)'\in {\cal C}^+\cap {\cal C}^-$ and $v=(-1/n,\ldots,-1/n)'\in {\cal C}^+\cap {\cal C}^-$ in (\ref{eq:caracterisation1bis}) leads to
\begin{equation*}
\frac{1}{n}\sum_{i=1}^n\iso(y)_i=\frac{1}{n}\sum_{i=1}^n\anti(y)_i=\overline{y},
\end{equation*}
which proves that for all $k\geq 1$, $\bar{\hat{u}}^{(k)}=\bar{y}$ and $\bar{\hat{b}}^{(k)}=0$. To prove that $\Delta\left(\hat{u}^{(k)}\right)\circ \Delta\left(\hat{b}^{(k)}\right)=0$, we will need the following result.

\begin{proposition}\label{pro:intermediaire-identifiabilite}
Let $y\in \R^n$, then  
\begin{equation}\label{eq:pro1}
u=\iso(y)\ \textrm{ and }\ b=\anti(y-u)\ \Rightarrow\ \Delta(u)\circ\Delta(b)=0
\end{equation}
and 
\begin{equation}\label{eq:pro2}
 b=\anti(y)\  \textrm{ and }\  u=\iso(y-b)\ \Rightarrow\ \Delta(u)\circ\Delta(b)=0.
\end{equation}
\end{proposition}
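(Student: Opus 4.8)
The plan is to establish (\ref{eq:pro1}) in detail and to deduce (\ref{eq:pro2}) by the mirror-image argument, exchanging the roles of $\iso$ and $\anti$. Fix a coordinate $i\in\{1,\dots,n-1\}$. Since $u$ is non-decreasing and $b$ is non-increasing, $\Delta(u)_i\geq 0$ and $\Delta(b)_i\leq 0$, so it is enough to prove that $\Delta(u)_i>0$ forces $\Delta(b)_i=0$.

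First I would record two elementary facts valid for any isotonic or antitonic regression, both read off from the optimality relations (\ref{elso})--(\ref{masod}). Decompose the fitted vector into its maximal level blocks. \emph{(a)} On each block the fitted value equals the arithmetic mean of the corresponding coordinates of the target. This follows by plugging the ray generators of ${\cal C}^+$ (resp.\ ${\cal C}^-$), of the form $(0,\dots,0,1,\dots,1)$ (resp.\ $(1,\dots,1,0,\dots,0)$), and $\pm(1,\dots,1)$ into (\ref{elso}): one obtains that the partial sums of the residual $y-u$ are non-negative (resp.\ non-positive) and vanish at the block endpoints, and then an Abel summation over the blocks, using (\ref{masod}) and the fact that consecutive block values are distinct, forces each block residual sum to vanish. \emph{(b)} As a consequence, at the boundary between a block $B$ and the following block $B'$: in the isotonic case the last coordinate of the target over $B$ is $\leq$ the mean over $B$ and the first coordinate over $B'$ is $\geq$ the mean over $B'$; in the antitonic case the inequalities are reversed. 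These come directly from the partial-sum inequalities of \emph{(a)} restricted to $B$ and to $B'$.

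Then I would apply \emph{(b)} to $u=\iso(y)$: if $\Delta(u)_i>0$, then $i$ is the right endpoint of a block $B$ and $i+1$ the left endpoint of $B'$, so $y_i\leq u_i$ and $y_{i+1}\geq u_{i+1}$, whence the residual $z:=y-u$ satisfies $z_i\leq 0\leq z_{i+1}$, i.e.\ $\Delta(z)_i\geq 0$. Next I would apply \emph{(b)} to $b=\anti(z)$ by contradiction: if $\Delta(b)_i<0$, then $i$ and $i+1$ are endpoints of consecutive blocks $C,C'$ of $\anti(z)$, and \emph{(b)} yields $z_i\geq b_i>b_{i+1}\geq z_{i+1}$, contradicting $\Delta(z)_i\geq 0$. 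Hence $\Delta(b)_i=0$, and (\ref{eq:pro1}) follows. For (\ref{eq:pro2}) the argument is identical with $w:=y-b$ in place of $z$: at every block boundary of $b=\anti(y)$, \emph{(b)} gives $w_i\geq 0\geq w_{i+1}$, and applying \emph{(b)} to $\iso(w)$ shows $u=\iso(w)$ cannot jump there.

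The only genuine content is fact \emph{(b)} of the first step --- the statement that the extreme coordinate of a level block lies on the ``correct'' side of the block mean --- together with the care needed to keep the isotonic and antitonic orientations consistent throughout; everything else is routine bookkeeping.
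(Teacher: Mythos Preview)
Your proof is correct and follows the same overall structure as the paper's: establish that at a strict increase of $u=\iso(y)$ the residual $z=y-u$ satisfies $z_i\leq 0\leq z_{i+1}$, and then conclude that $\anti(z)$ cannot jump at $i$. The difference lies in how the key block-boundary inequality (your fact \emph{(b)}) is obtained. The paper argues from the PAVA description: assuming $y_i>u_i$, it exhibits a strictly better isotone candidate by splitting $y_i$ off from its block, invoking \cite{barlow1972statistical} for the mean-over-block property. You instead derive \emph{(b)} from the variational conditions (\ref{elso})--(\ref{masod}) via the partial-sum characterization and an Abel summation. Both routes are valid; yours has the advantage of being self-contained (no appeal to the PAVA algorithm) and of making the final implication ``$z_i\leq z_{i+1}\Rightarrow b_i=b_{i+1}$'' explicit --- in the paper this last step is asserted in one line, whereas you close it cleanly by a second application of \emph{(b)} to the antitonic regression.
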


\noindent \begin{proof}
Let us prove for example that
$$u_i<u_{i+1} \Rightarrow b_i=b_{i+1}.$$
We first establish that after an isotone regression, the last observation $y_i$ of a cluster $A_i$ is always lower than or equal to $u_i$. For this, we use a proof by contradiction, arguing that a situation like the one depicted in figure \ref{fig:proposition-identifiabilite} is impossible.  

\begin{figure}[H]
\begin{center}
\input{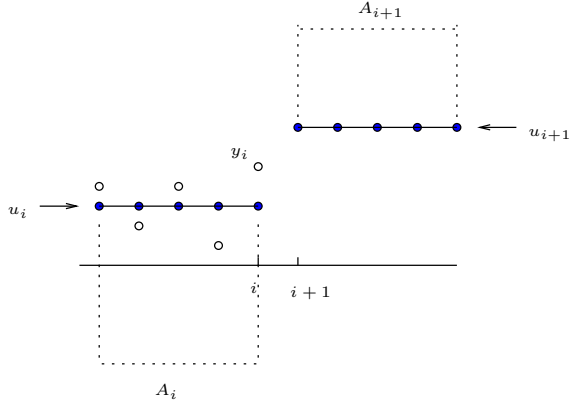}
\end{center}
\caption{Notations for the clusters $A_i$ and $A_{i+1}$.}
\label{fig:proposition-identifiabilite}
\end{figure}

Let us denote $A_i$ the cluster with last element $y_i$. For the sake of simplicity, $u_i$ stands for the last common value to all elements of cluster $A_i$ after the isotone regression. From the properties of the Pool Adjacent Violators Algorithm (see for example \cite{barlow1972statistical}), it is well-known that $u_i$ satisfies the following equation 
$$ u_i=\frac{1}{|A_i|}\sum_{y_j \in A_i}y_j,$$
where $|A_i|$ is the cardinal of $A_i$. Equivalently, the next cluster is denoted $A_{i+1}$ and the corresponding isotonic value $u_{i+1}$. 

\medskip

Now, let us assume that $u_i<y_i$, and denote 
$$c_{i-1}=\frac{1}{|A_i|-1}\sum_{y_j \in A_i,y_j\neq y_i}y_j$$ 
so that, clearly, $c_{i-1}<u_i$. Besides, it is readily seen that $y_i< u_{i+1}$, else $y_i$ would belong to cluster $A_{i+1}$. Since $c_{i-1}$ is the average of the $y_j$'s belonging to $A_i-\{y_i\}$, the following inequality holds
$$\sum_{j \in A_i-\{y_i\}}(y_j-c_{i-1})^2<\sum_{j \in A_i-\{y_i\}}(y_j-u_i)^2$$
so that
\begin{align*}
&\sum_{j \in A_i-\{y_i\}}(y_j-c_{i-1})^2+(y_i-y_i)^2+\sum_{j \in A_{i+1}}(y_j-u_{i+1})^2\\ 
&\quad  < \sum_{j \in A_i-\{y_i\}}(y_j-u_i)^2+(y_i-u_i)^2+\sum_{j \in A_{i+1}}(y_j-u_{i+1})^2\\
 &\qquad  < \sum_{j \in A_i}(y_j-u_i)^2+\sum_{j \in A_{i+1}}(y_j-u_{i+1})^2.
\end{align*} 
This latter inequality indicates that the isotone regression with the values $\{c_{i-1},y_i,u_{i+1}\}$ would be better than the original one with $\{u_i,u_{i+1}\}$, which is in contradiction with the very definition of the isotone regression, therefore $y_i\leq u_i$.

\medskip

We prove in the same way that: 
$$y_{i+1}\geq u_{i+1}.$$
Hence, we obtain
$$ r_i=y_i-u_i  \leq 0 \qquad\textrm{and}\qquad r_{i+1}=y_{i+1}-u_{i+1}\geq 0$$ 
so that $ r_i\leq r_{i+1}$, and $b_i=b_{i+1}$, which is the desired result. 
\end{proof}

This result enables us to prove the so-called identifiability conditions for algorithms \textsf{I.I.R.} and  \textsf{I.I.B.R}.

\begin{corollary}\label{cor:identifiabilite}
For all $k\geq 1$,
\begin{equation}\label{eq:bir-ident1}
\Delta(\tilde{u}^{(k)})\circ \Delta(\tilde{b}^{(k)})=0.
\end{equation}
and
\begin{equation}\label{eq:bir-ident2}
\Delta(\hat{u}^{(k)})\circ \Delta(\hat{b}^{(k)})=0.
\end{equation}
\end{corollary}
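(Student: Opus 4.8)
The plan is to deduce both assertions from Proposition \ref{pro:intermediaire-identifiabilite} together with the equivalence between the two algorithms established in Theorem \ref{theo:egalite-algos}. I would begin with \eqref{eq:bir-ident1}. The \textsf{I.I.B.R.} cycle is built precisely so that $\tilde{u}^{(k)}=\iso(y-\hat{y}^{(k-1)})$ and $\tilde{b}^{(k)}=\anti\big((y-\hat{y}^{(k-1)})-\tilde{u}^{(k)}\big)$; writing $z^{(k-1)}:=y-\hat{y}^{(k-1)}$, this is exactly the situation of implication \eqref{eq:pro1} applied to the vector $z^{(k-1)}$, namely $u=\iso(z)$ and $b=\anti(z-u)$ forces $\Delta(u)\circ\Delta(b)=0$. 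Hence $\Delta(\tilde{u}^{(k)})\circ\Delta(\tilde{b}^{(k)})=0$ for every $k\geq 1$, which gives \eqref{eq:bir-ident1} with essentially no extra work.

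For \eqref{eq:bir-ident2} I would not try to massage the \textsf{I.I.R.} cycle directly, but instead use Theorem \ref{theo:egalite-algos}, which tells us $\hat{u}^{(k)}=\sum_{j=1}^k\tilde{u}^{(j)}$ and $\hat{b}^{(k)}=\sum_{j=1}^k\tilde{b}^{(j)}$. It then suffices to argue, coordinate-by-coordinate, that at each consecutive pair of indices $i,i+1$ exactly one of the two families of increments is ``switched on''. Fix $i$ and suppose $\hat u^{(k)}_{i+1}>\hat u^{(k)}_{i}$; I would show that for every $j\le k$ one has $\tilde b^{(j)}_{i}=\tilde b^{(j)}_{i+1}$, so that summing over $j$ yields $\hat b^{(k)}_{i+1}-\hat b^{(k)}_i=0$. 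The mechanism is monotonicity of the partial sums: by \eqref{eq:intermediaire} in the proof of Theorem \ref{theo:egalite-algos} each increment $\hat u^{(j)}-\hat u^{(j-1)}=\tilde u^{(j)}$ lies in ${\cal C}^+$ and each $\tilde b^{(j)}$ lies in ${\cal C}^-$, so the quantities $\tilde u^{(j)}_{i+1}-\tilde u^{(j)}_i$ are all $\ge 0$ and the $\tilde b^{(j)}_{i+1}-\tilde b^{(j)}_i$ are all $\le 0$. Since $\hat u^{(k)}_{i+1}-\hat u^{(k)}_i=\sum_j(\tilde u^{(j)}_{i+1}-\tilde u^{(j)}_i)>0$, at least one term is strictly positive, and by \eqref{eq:bir-ident1} the corresponding $\tilde b^{(j)}$ term must vanish; but then I still need the remaining $\tilde b^{(j)}$ terms to vanish too. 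To get this I would run the symmetric argument at the level of the single step: once $\tilde u^{(j_0)}_{i+1}>\tilde u^{(j_0)}_i$ for some $j_0$, the running sum $\hat u^{(j)}_{i+1}-\hat u^{(j)}_i$ is strictly positive for all $j\ge j_0$ and zero for $j<j_0$ (by nonnegativity of the increments), and a parallel statement must hold for the $\hat b$-sums because, applying Proposition \ref{pro:intermediaire-identifiabilite} now to the \textsf{I.I.R.} cycle $\hat u^{(j)}=\iso(y-\hat b^{(j-1)})$, $\hat b^{(j)}=\anti(y-\hat u^{(j)})$ — which is exactly the hypothesis of \eqref{eq:pro2} with $b=\hat b^{(j-1)}$ replaced appropriately — one gets $\Delta(\hat u^{(j)})\circ\Delta(\hat b^{(j)})=0$ directly. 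In fact this last observation short-circuits everything: \eqref{eq:bir-ident2} is nothing but \eqref{eq:pro1} (or \eqref{eq:pro2}) applied to the vector $y-\hat b^{(k-1)}$ (resp.\ $y-\hat u^{(k)}$), since $\hat u^{(k)}=\iso(y-\hat b^{(k-1)})$ and $\hat b^{(k)}=\anti(y-\hat u^{(k)})$ is precisely the pattern ``$u=\iso(z)$, $b=\anti(z-u)$'' with $z=y-\hat b^{(k-1)}$.

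So my actual write-up would be short: apply Proposition \ref{pro:intermediaire-identifiabilite}, implication \eqref{eq:pro1}, to $z=y-\hat y^{(k-1)}$ to get \eqref{eq:bir-ident1}; apply it again, to $z=y-\hat b^{(k-1)}$, using $\hat u^{(k)}=\iso(y-\hat b^{(k-1)})$ and $\hat b^{(k)}=\anti\big((y-\hat b^{(k-1)})-\hat u^{(k)}\big)=\anti(y-\hat u^{(k)})$, to get \eqref{eq:bir-ident2}. The only point that needs care — and what I expect to be the main obstacle in presentation rather than in substance — is checking that the \textsf{I.I.R.} update really does fit the template of Proposition \ref{pro:intermediaire-identifiabilite}: one must verify that $y-\hat b^{(k-1)}-\hat u^{(k)}=(y-\hat b^{(k-1)})-\iso(y-\hat b^{(k-1)})$ so that $\hat b^{(k)}=\anti(y-\hat u^{(k)})$ is indeed the antitonic regression of the \emph{residual} of that isotonic regression, which is immediate once one writes it out. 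No induction or appeal to the limiting behaviour is needed; Theorem \ref{theo:egalite-algos} is not even logically required for this corollary, though it makes the two identities manifestly the ``same'' fact.
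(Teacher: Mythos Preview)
Your treatment of \eqref{eq:bir-ident1} is correct and is exactly what the paper does: apply Proposition~\ref{pro:intermediaire-identifiabilite}, implication~\eqref{eq:pro1}, to $z=y-\hat y^{(k-1)}$.

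Your ``short-circuit'' for \eqref{eq:bir-ident2}, however, contains a real error. You claim that with $z=y-\hat b^{(k-1)}$ one has $\hat u^{(k)}=\iso(z)$ and $\hat b^{(k)}=\anti(z-\hat u^{(k)})$, so that Proposition~\ref{pro:intermediaire-identifiabilite} applies directly. The first identity is fine, but the second is \emph{false} for $k\ge 2$: by the \textsf{I.I.R.} cycle, $\hat b^{(k)}=\anti\!\big(y-\hat u^{(k)}\big)$, whereas $z-\hat u^{(k)}=y-\hat b^{(k-1)}-\hat u^{(k)}$. These two inputs differ by $\hat b^{(k-1)}\neq 0$, so $\hat b^{(k)}$ is \emph{not} the antitonic regression of the residual of $\iso(z)$ (what \emph{is} that antitonic regression is $\tilde b^{(k)}$, via Theorem~\ref{theo:egalite-algos}). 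The sentence ``which is immediate once one writes it out'' is therefore where the argument breaks.

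This gap is exactly why the paper does not proceed in one stroke. Its proof of \eqref{eq:bir-ident2} is by induction on $k$ and \emph{does} use Theorem~\ref{theo:egalite-algos}: writing $\hat u^{(k+1)}=\hat u^{(k)}+\tilde u^{(k+1)}$ and $\hat b^{(k+1)}=\hat b^{(k)}+\tilde b^{(k+1)}$, one expands $\Delta(\hat u^{(k+1)})\circ\Delta(\hat b^{(k+1)})$, kills two of the four terms with the induction hypothesis and \eqref{eq:bir-ident1}, and kills the two cross terms by applying Proposition~\ref{pro:intermediaire-identifiabilite} once with $z=y-\hat b^{(k)}$ (this gives $\Delta(\hat u^{(k+1)})\circ\Delta(\tilde b^{(k+1)})=0$, hence $\Delta(\hat u^{(k)})\circ\Delta(\tilde b^{(k+1)})=0$ after subtracting \eqref{eq:bir-ident1}) and once with $z=y-\hat u^{(k)}$ (symmetrically). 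Your earlier, longer sketch was heading in a direction compatible with this, but the ``short-circuit'' you settled on does not work; in particular, your final claim that ``no induction is needed and Theorem~\ref{theo:egalite-algos} is not even logically required'' is incorrect.
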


\noindent\begin{proof}
Equation (\ref{eq:bir-ident1}) follows immediately from Proposition \ref{pro:intermediaire-identifiabilite}. On the other hand, equation (\ref{eq:bir-ident2}) requires more attention.  We prove this by induction. This is obviously true for $k=1$ by Proposition \ref{pro:intermediaire-identifiabilite}.  Let us fix $k\geq 1$, and assume that
$$\Delta(\hat{u}^{(k)})\circ \Delta(\hat{b}^{(k)})=0.$$
Since $\hat{u}^{(k+1)}=\hat{u}^{(k)}+\tilde{u}^{(k+1)}$ and $\hat{b}^{(k+1)}=\hat{b}^{(k)}+\tilde{b}^{(k+1)}$, we get
\begin{equation}\label{eq:rec1}
\Delta(\hat{u}^{(k+1)})\circ \Delta(\hat{b}^{(k+1)})=\Delta(\hat{u}^{(k)})\circ \Delta(\tilde{b}^{(k+1)})+ \Delta(\tilde{u}^{(k+1)})\circ \Delta(\hat{b}^{(k)}),
\end{equation} 
and our objective is to prove that both terms on the right-hand side of this equation are equal to zero. First notice that 
$$\tilde{b}^{(k+1)}=\anti(y-\hat{y}^{(k)}-\tilde{u}^{(k+1)})=\anti(y-\hat{b}^{(k)}-\hat{u}^{(k+1)})$$
and by definition of $\hat{u}^{(k+1)}$,
$$\hat{u}^{(k+1)}=\iso(y-\hat{b}^{(k)}).$$
Then, Proposition \ref{pro:intermediaire-identifiabilite} gives
$$ \Delta(\hat{u}^{(k+1)})\circ \Delta(\tilde{b}^{(k+1)})=0$$
so that 
$$\left\{\Delta(\hat{u}^{(k)})+\Delta(\tilde{u}^{(k+1)})\right\}\circ\Delta(\tilde{b}^{(k+1)})=0$$
and finally
$$\Delta(\hat{u}^{(k)})\circ\Delta(\tilde{b}^{(k+1)})=0.$$
Let us now turn to the second term on the right-hand side of equation (\ref{eq:rec1}). Just note that
$$\tilde{u}^{(k+1)}=\iso(y-\hat{y}^{(k)})=\iso(y-\hat{u}^{(k)}-\hat{b}^{(k)})$$
where
$$ \hat{b}^{(k)}=\anti(y-\hat{u}^{(k)}).$$
Invoking Proposition \ref{pro:intermediaire-identifiabilite} again, we are led to
$$ \Delta(\tilde{u}^{(k+1)})\circ \Delta(\hat{b}^{(k)})=0$$
and the right-hand side of equation (\ref{eq:pro1}) is equal to zero.
\end{proof}

\noindent{\bf Remark:} One can notice that the proof of Corollary \ref{cor:identifiabilite} uses the fact that algorithms \textsf{I.I.R.} and \textsf{I.I.B.R.} coincide (Theorem \ref{theo:egalite-algos}), but of course the proof of this theorem did not make any use of Corollary \ref{cor:identifiabilite}. A quick inspection shows that this last result is applied only in the demonstration of Corollary \ref{theo:iir-interpolation2}.

\end{document}